\newtheorem{thm}{Theorem}
\newtheorem{remark}{Remark}%
\newtheorem{lemma}{Lemma}
\begin{document}

\begin{frontmatter}
\title{Distributed Delay Systems with Oscillations\\in the Time Histories}

\author[dtu]{Adam Mielke\corref{cor1}}
\ead{kontakt@mielke.dk}
\author[dtu]{Mads Peter Sørensen}
\ead{mpso@dtu.dk}
\author[nmbu]{John Wyller}
\ead{john.wyller@nmbu.no}
\cortext[cor1]{Corresponding author}
\address[dtu]{Department of Applied Mathematics and Computer Science, Technical University of Denmark, Richard Petersens Plads, Bldg. 324, DK-2800 Kgs.\ Lyngby, Denmark}
\address[nmbu]{Department of Mathematics, Faculty of Science and Technology, Norwegian University of Life Science, P.O. Box 5003, NO-1432, \AA s, Norway.}

\begin{abstract}
	
We design a Linear Chain Trick (LCT)-algorithm for dynamical systems with distributed time delay where the time histories contain temporal oscillations. The methodology is illustrated  by means of an example in population dynamics. 

\end{abstract}

\end{frontmatter}

\section{Introduction}\label{Introduction}

 The Linear Chain Trick (LCT) has emerged as a tool for studying dynamical systems with distributed time delay. See \cite{cushing2013,macdonald1978difference} and the references therein. It represents an efficient methodology to convert a class of distributed delay systems to systems of ordinary differential equations (ODEs). The latter type of systems can easily be analysed by means of standard techniques and numerical computations.\\
\newline
 The LCT is also generalized to a method which can be used to construct mean field ODE models from continuous-time stochastic state transition models where the time an individual spends in a given state (i.e., the dwell time) is Erlang distributed (i.e., gamma distributed with integer shape parameter). See \cite{hurtado2019generalizations,hurtado2021building}.\\
\newline
Distributed delay systems appear as descriptions for genetic networks \cite{shlykova2008general,shlykova2010singular}, epidemiological processes \cite{MielkeSorensenWyller}, predator-prey interactions, bio-economical models \cite{Bergland2022TimeDA} and economic growth models (of the Kaldor-Kalecki type) \cite{guerrini2020bifurcations}. Moreover, singular perturbation theory for dynamical systems offers also a motivation for studying general properties of modified versions of LCT.  This was a key issue in \cite{shlykova2008general} and \cite{shlykova2010singular}. A delay estimation procedure in distributed delay system is proposed in \cite{RitschelWyller}.  Last but not least, distributed delay systems appear in non-local, nonlinear wave packet models in optics with the Raman response incorporated \cite{blow1989theoretical,laegsgaard2007mode,dudley2010supercontinuum}.\\
\newline
This means that the LCT-technique has a wide range of applications, which serves as a background and motivation for exploring different aspects of the LCT-procedure further.
In this paper, we look at the role of oscillations in the  dependence on the history. That is, if the integration kernel itself is oscillating. This comes about when a particular time scale is at play, such as remembering how the world was last time it was this particular weekday or this particular season.\\
\newline
 The present paper is organized as follows: Section \ref{Examples} is devoted to a motivating example from population dynamics illustrating the application of the LCT-technique to distributed time delay systems with oscillations in the time history. Here the oscillations appear as perturbations on top of Erlang distributed temporal kernels.
 The methodology used in this example extends the methodology presented in \cite{cushing2013} and \cite{ponosov2004thew}. Here the focus is on the impact of the oscillatory time history on local structural stability around hyperbolic equilibrium points and Hopf-bifurcations. In Section \ref{LCT} we present the LCT-method for a general dynamical system with oscillations in the time history where the temporal oscillations are perturbations superimposed on top of Erlang distributed memory functions. Section \ref{Concluding remarks and extensions} contains concluding remarks and a discussion of possible extensions. In \ref{Continuous dependence on the time history} we discuss the robustness property of the LCT-algorithm: We prove that the mapping from the Banach space of absolute integrable memory functions to the Banach space of absolute continuous solutions of the distributed time delay system is continuous with respect to the metrics (Theorem \ref{stabilitytheorem}). This theorem sheds light on the dynamical properties of the ODE-systems which we derive in Section \ref{Examples} by means of the LCT-algorithm.

\section{Example and Construction of Oscillations in Time History}\label{Examples}

In this section we demonstrate the application of the LCT-methodology to distributed time delay systems with oscillations in the time history to a logistic equation. We calculate this example explicitly, as it will be used as motivation for the general framework we introduce in Section \ref{LCT}.\\
\newline
The standard logistic equation
\begin{equation}
\dot{x}=r x\left(1-\frac{x}{K}\right)\label{logisticbasic}
\end{equation}
serves as a simple skeleton model for population growth in an ecological or bioeconomical system where limited food resources cause a saturation in this growth. Here $x$ is the population density, $r>0$ the logistic growth rate and  $K$ the constant carrying capacity.\\
\newline
 We extend the model (\ref{logisticbasic}) to the distributed delay system  
\begin{equation}
    \dot {x}= r x\left(1 - \frac{\alpha\otimes x}{K}\right)\label{logisticdistributeddelay}
\end{equation}
where $\alpha\otimes x$ is the convolution integral defined as 
\begin{align}
   [\alpha\otimes x](t) &\equiv \int\limits_{-\infty}^{t} \alpha(t - s) x(s)\,\mathrm ds,
\end{align}
%
%
%
It is tacitly assumed that the integral kernel $\alpha$ is absolute integrable.\\
\newline
Let us model the memory effect by means of the perturbed Erlang distribution
\begin{equation}
\label{system3logistic}
\alpha(t)=\theta(t)+\frac{1}{2}\varepsilon\big[\varphi(t)+(\varphi)^{\ast}(t)\big]
\end{equation}
where $\theta$ and  $\varphi$ are defined as
\begin{equation}
\label{system4logistic}
\left.\begin{aligned}
&\theta(t)=\sigma^{2}t e^{-\sigma t}&\\
&\varphi(t)=\sigma^{2}t e^{\lambda t},\quad \lambda\equiv-\sigma+i\Omega, \quad (i^{2}=-1)&
\end{aligned} \right\}
\end{equation}
and the operation $\ast$ means complex conjugation. This means that we account for finite constant amplitude oscillations (with the frequency $\Omega$) superimposed on the background 'strong'  memory kernel. We proceed as follows: Introduce the auxilliary variables $V_{1}$, $V_{2}$, $W_{1}$ and $W_{2}$ corresponding to these  kernels, i.e., 
\begin{subequations}
\label{auxilliaryvariables}
\begin{eqnarray}
&V_{1}(t)= [\theta_{0}\otimes x](t)\equiv\int\limits_{-\infty}^{t}\theta_{0}(t-s)x(s)ds,\quad \theta_{0}(t)=\sigma e^{-\sigma t}&\label{auxilliaryvariablesa}\\
&&\nonumber\\
&V_{2}(t)= [\theta\otimes x](t)\equiv\int\limits_{-\infty}^{t}\theta(t-s)x(s)ds&\label{auxilliaryvariablesb}\\
&&\nonumber\\
&W_{1}(t)= [\phi_{0}\otimes x](t)\equiv\int\limits_{-\infty}^{t}\phi_{0}(t-s)x(s)ds,\quad \phi_{0}(t)=\sigma e^{\lambda t}&\label{auxilliaryvariablesc}\\
&&\nonumber\\
&W_{2}(t)=[\phi\otimes x](t)\equiv\int\limits_{-\infty}^{t}\phi(t-s)x(s)ds&\label{auxilliaryvariablesd}
\end{eqnarray}
\end{subequations}
and derive the rate equations 
\begin{subequations}
\label{rate1}
\begin{eqnarray}
&\dot{V}_{1}=-\sigma V_{1}+\sigma x,\quad \dot{V}_{2}=-\sigma V_{2}+\sigma V_{1}&\label{rate1a}\\
&&\nonumber\\
&\dot{W}_{1}=\lambda W_{1}+\sigma x,\quad \dot{W}_{2}=\lambda W_{2}+\sigma W_{1}&\label{rate1b}
\end{eqnarray}
\end{subequations}
for $V_{k}$, $W_{k}$, (k=1,2). The rate equations for $W_{1}=u_{1}+iv_{1}$ and $W_{2}=u_{2}+iv_{2}$ (with $u_{k}, v_{k}\in\mathbb{R}$) decompose into real and imaginary parts, i.e., 
\begin{subequations}
\label{rate2}
\begin{eqnarray}
&\dot{u}_{1}=-\sigma u_{1}-\Omega v_{1}+\sigma x&\label{rate2a}\\
&&\nonumber\\
&\dot{v}_{1}=-\sigma v_{1}+\Omega u_{1}&\label{rate2b}\\
&&\nonumber\\
&\dot{u}_{2}=-\sigma u_{2}-\Omega v_{2}+\sigma u_{1}&\label{rate2c}\\
&&\nonumber\\
&\dot{v}_{2}=-\sigma v_{2}+\Omega u_{2}+\sigma v_{1}&\label{rate2d}
\end{eqnarray}
\end{subequations}
The dynamical equation for $x$ then assumes the form
\begin{equation}
  \dot {x}= rx\left(1 - \frac{V_{2}+\varepsilon u_{2}}{K}\right).\label{logisticdelay}
\end{equation}
Hence the actual distributed time delay equation transforms to a $7D$ coupled autonomous system of ODEs, namely the equations (\ref{rate1a}), (\ref{rate2}) and (\ref{logisticdelay}).   
We conveniently write the resulting autonomous system on the compact vector form
\begin{equation}
\dot{\mathbf{X}}=\mathbf{F}(\mathbf{X})\label{Gsystem1}
\end{equation}
where 
\begin{equation}
\mathbf{X}=[x,V_{1},V_{2},u_{1},v_{1},u_{2},v_{2}]\label{Gsystem2}
\end{equation}
The vector field $\mathbf{F}:\mathbb{R}^{7}\rightarrow\mathbb{R}^{7}$ is defined as
\begin{equation}
\mathbf{F}(\mathbf{X})=[F_{1}(\mathbf{X}), F_{2}(\mathbf{X}),\cdots,F_{7}(\mathbf{X})]\label{Gsystem3}
\end{equation}
where the components functions $F_{j}$, $(j=1,2,\cdots,7)$ are given as
\begin{subequations}
\label{Componentfunctionsvectorfieldlogistic}
\begin{eqnarray}
&F_{1}(\mathbf{X})=r x(1-\frac{V_{2}+\varepsilon u_{2}}{K})&\label{Componentfunctionsvectorfieldlogistica}\\
&&\nonumber\\
&F_{2}(\mathbf{X})=-\sigma V_{1}+\sigma x,\quad F_{3}(\mathbf{X})=-\sigma V_{2}+\sigma V_{1}&\label{Componentfunctionsvectorfieldlogisticb}\\
&&\nonumber\\
&F_{4}(\mathbf{X})=-\sigma u_{1}-\Omega v_{1}+\sigma x,\quad F_{5}(\mathbf{X})=-\sigma v_{1}+\Omega u_{1}&\label{Componentfunctionsvectorfieldlogisticc}\\
&&\nonumber\\
&F_{6}(\mathbf{X})=-\sigma u_{2}-\Omega v_{2}+\sigma u_{1},\quad F_{7}(\mathbf{X})=-\sigma v_{2}+\Omega u_{2}+\sigma v_{1}&
\label{Componentfunctionsvectorfieldlogisticd}
\end{eqnarray}
\end{subequations}
The autonomous dynamical system (\ref{Gsystem1}) - (\ref{Componentfunctionsvectorfieldlogistic}) can now be analyzed with respect to existence and linear stability of equilibrium states in the standard way.\\
\newline
We start out with a summary of the stability properties when $\varepsilon=0$. As expected the stability analysis then relies on the $3D$ - subsystem consisting of the equations (\ref{rate1a}) and  (\ref{logisticdelay}). We observe that $Q_{0}: [x,V_{1},V_{2}]=[0,0,0]$  and $Q_{e}: [x,V_{1},V_{2}]=[K,K,K]$ are the equilibrium points of this subsystem.  For the trivial equilibrium $Q_{0}$, the starting point for the stability analysis is the Jacobian
 \begin{equation}
\mathbf{J}(Q_{0})
=\left[
  \begin{array}{cccccccc}
r& 0&0& \\
\sigma& -\sigma&0\\
0&\sigma&-\sigma
    \end{array}
\right]\label{matrixP0}
\end{equation}
from which we get the characteristic polynomial
\begin{equation}
\mathcal{P}_{0}(z)=(z-r)(z+\sigma)^{2}\label{P0}
\end{equation}
That is, the eigenvalues are $r$ and $-\sigma$.
Because $r>0$, we conclude that the trivial equilibrium point $P_{0}=[0,0,0]$ is always unstable. The nontrivial equilibrium of that subsystem (\ref{rate1a}) and  (\ref{logisticdelay}) is given by $Q_{e}:[x,V_{1},V_{2}]=[K,K,K]$. We readily find that the Jacobian of the vector field defining this subsystem evaluated at $Q_{e}$ is given as
\begin{equation}
\mathbb{J}_{33}=\left[
  \begin{array}{cccccccc}
0& 0& -r\\
\sigma& -\sigma&0\\
0&\sigma&-\sigma
    \end{array}
\right]\label{Jacobian3D}
\end{equation}
The characteristic polynomial $\mathcal{P}_{3}$ of $\mathbb{J}_{3}$  becomes
\begin{eqnarray}
& \mathcal{P}_{3}(z)\equiv\vert z\mathbb{I}_{33}-\mathbb{J}_{33}\vert=z^{3}+a_{1}z^{2}+a_{2}z+a_{3}&\nonumber\\
&&\label{characteristicpol3}\\
&a_{1}=2\sigma,\quad a_{2}=\sigma^{2},\quad a_{3}=r\sigma^{2}&\nonumber
\end{eqnarray}
Here $\mathbb{I}_{33}$ is the $3\times 3$-unit matrix. The Routh-Hurwitz matrices are given as \cite{hurwitz1964conditions}
\begin{eqnarray}
&\mathbf{D}_{1}=[a_{1}]=[2\sigma],\quad \mathbf{D}_{2}=\left[
  \begin{array}{cccccccc}
a_{1}& a_{3}\\
1& a_{2}
    \end{array}
\right] =\left[
  \begin{array}{cccccccc}
2\sigma&r\sigma^{2}\\
1& \sigma^{2}
    \end{array}
\right]&\nonumber\\
&&\nonumber\\
&\mathbf{D}_{3}=\left[
  \begin{array}{cccccccc}
a_{1}& a_{3}&0\\
1& a_{2}&0\\
0& a_{1}&a_{3}
    \end{array}
\right]=\left[
  \begin{array}{cccccccc}
2\sigma&r\sigma^{2}&0\\
1&\sigma^{2}&0\\
0& 2\sigma&r\sigma^{2}
    \end{array}
\right]&\nonumber
\end{eqnarray}
from which the expressions 
\begin{equation}
\vert\mathbf{D}_{1}\vert=2\sigma, \quad \vert\mathbf{D}_{2}\vert=2\sigma^{2}(\sigma-\frac{1}{2}r),\quad \vert\mathbf{D}_{3}\vert=2r\sigma^{4}(\sigma-\frac{1}{2}r)\nonumber
\end{equation}
for the Routh-Hurwitz determinants follow. The Routh-Hurwitz  criterion implies that all the zeros of $\mathcal{P}_{3}$ are located in left half $z$-plane if and only if $\vert\mathbf{D}_{j}\vert>0$ for $j=1,2,3$. The latter condition is fulfilled if and only if $\sigma>\frac{1}{2}r$. We therefore conclude that the equilibrium point $[K,K,K]$  is asymptotically stable if $\sigma>\frac{1}{2}r$ and unstable in the complementary regime $0<\sigma<\frac{1}{2}r$. Notice that this result agrees with stability result obtained in \cite{cushing2013} by direct linearization of the model (\ref{logisticdistributeddelay}) about the equilibrium $x=K$. We readily find that the transversality condition
\begin{equation}
\vert\mathbf{D}_{2}\vert=0\Leftrightarrow \sigma=\frac{1}{2}r,\quad \partial_{\sigma}\big[\vert\mathbf{D}_{2}\vert\big](\sigma=\frac{1}{2}r)=\frac{1}{2}r^{2}\neq 0\nonumber
\end{equation}
is satisfied. Notice that the eigenvalues of the Jacobian $\mathbf{J}_{3}$ are given as
\begin{equation}
z_{1}=-r < 0,\quad z_{\pm}=\pm i\sqrt{a_{3}/a_{1}} = \frac{i}{2}r\label{eigenvaluesJ3}
\end{equation}
when $\sigma=\frac{1}{2}r$. According to \cite{shen1995new,jing1993qualitative, Nordbo2007}, we have a generic Hopf-bifurcation at $\sigma=\frac{1}{2}r$. Small amplitude oscillations are generated at this point with a frequency $\sqrt{a_{3}/a_{1}}$. Notice that the existence of a Hopf-bifurcation in the modelling framework (\ref{logisticdistributeddelay}) is pointed out in \cite{cushing2013}. \\
\newline
Next, let us consider  the dynamical system (\ref{Gsystem1})-(\ref{Componentfunctionsvectorfieldlogistic}). This system possesses two equilibrium points, the trivial state $P_{\ast}=[0,0,0,0,0,0,0]$ and a nontrivial equilibrium point which is denoted by $P_{e}=[x_{e},V_{1,e},V_{2,e}, u_{1,e}, v_{1,e}, u_{2,e}, u_{2,e}]$.
We readily find that the coordinates of $P_{e}$ are given as
\begin{subequations}
\label{equilibriumlogistic}
\begin{eqnarray}
&V_{1,e}=V_{2,e}=x_{e}=\frac{(1+\theta)^{2}}{(1+\theta)^{2}-\varepsilon(\theta-1)}K&\label{equilibriumlogistica}\\
&&\nonumber\\
&u_{1,e}=\frac{1+\theta}{(1+\theta)^{2}-\varepsilon(\theta-1)}K
,\quad v_{1,e}=\frac{\theta^{1/2}(1+\theta)}{(1+\theta)^{2}-\varepsilon(\theta-1)}K&\label{equilibriumlogisticb}\\
&&\nonumber\\
&u_{2,e}=\frac{1-\theta}{(1+\theta)^{2}-\varepsilon(\theta-1)}K,\quad v_{2,e}=\frac{2\theta^{1/2}}{(1+\theta)^{2}-\varepsilon(\theta-1)}K&\label{equilibriumlogisticc}
\end{eqnarray}
\end{subequations}
where the dimensionless positive parameter $\theta$ is defined as the square of the ratio between the delay time scale $T_{\sigma}\equiv 1/\sigma$ and the period $T_{\Omega}\equiv 1/\Omega$ of the oscillations in the time history, i.e.,
\begin{equation}
\theta\equiv \big(T_{\sigma}/T_{\Omega}\big)^{2}=\big(\Omega/\sigma\big)^{2}\label{equilibriumlogisticd}
\end{equation}
Thus the presence of a finite amplitude oscillation in the time history causes an $\varepsilon$-dependent change in the equilibrium state as compared with the equilibrium state in the standard case without oscillations in the time history. The case $\epsilon=0$ is reobtained through the decoupling of $x$ and $u_2$, not from a direct reduction in dimensionality.\\
\newline
We next study the stability of the equilibria, starting with the trivial one $P_{\ast}$. The Jacobian of the vector field $\mathbf{F}$ evaluated at this point is given as
 \begin{equation}
D_{\textbf{x}}\textbf{F}(P_{\ast})
=\left[
  \begin{array}{cccccccc}
r& 0&0& 0& 0&0&0 \\
\sigma& -\sigma&0&0&0&0&0\\
0&\sigma&-\sigma&0&0&0&0\\
\sigma&0&0&-\sigma&-\Omega&0&0\\
0&0&0&\Omega&-\sigma&0&0\\
0&0&0&\sigma&0&-\sigma&-\Omega\\
0&0&0&0&\sigma&\Omega&-\sigma
    \end{array}
\right]\nonumber
\end{equation}
Noticing the block structure of this matrix (with the $3\times 3$ matrix given as (\ref{matrixP0}) as upper left block), we easily derive the characteristic polynomial $\mathcal{P}_{0}$ of this matrix:
\begin{equation}
\mathcal{P}_{0}(z)=\mathcal{P}_{0}(z)\cdot\big[\mathcal{P}_{2}(z)\big]^{2}\nonumber
\end{equation}
Here $\mathcal{P}_{0}$ is given by (\ref{P0}) and
\begin{equation}
\mathcal{P}_{2}(z)=(z+\sigma)^{2}+\Omega^{2}\label{P2}
\end{equation}
is the characteristic polynomial of the $2\times 2$- matrix
\begin{equation}
\mathbb{J}_{22}=\left[
  \begin{array}{cccccccc}
-\sigma&-\Omega\\
\Omega&-\sigma
    \end{array}
\right]\nonumber
\end{equation}
Hence we have that $z_{\pm}=-\sigma\pm i\Omega$ are zeros of order $2$ of the characteristic polynomial $\mathcal{P}_{7}$ and that $Re[z_{\pm}]=-\sigma<0$. Since the eigenvalue $z=r>0$ we conclude that the equilibrium point $P_{\ast}$ is unstable for all $\varepsilon$.\\
\newline
Next we consider the stability problem for the equilibrium state $P_{e}$.
We first compute the Jacobian $D_{\mathbf{X}}\mathbf{F}$ of the vector field $\mathbf{F}$ evaluated at $P_{e}$ and obtain
\begin{equation}
D_{\textbf{x}}\textbf{F}(P_{e})
=\left[
  \begin{array}{cccccccc}
0& 0& -U_{e}& 0& 0&-\varepsilon U_{e}&0 \\
\sigma& -\sigma&0&0&0&0&0\\
0&\sigma&-\sigma&0&0&0&0\\
\sigma&0&0&-\sigma&-\Omega&0&0\\
0&0&0&\Omega&-\sigma&0&0\\
0&0&0&\sigma&0&-\sigma&-\Omega\\
0&0&0&0&\sigma&\Omega&-\sigma
    \end{array}
\right], \quad U_{e}\equiv\frac{r x_{e}}{K}\nonumber
\end{equation}
The characteristic polynomial $\mathcal{P}_{7}$ of $D_{\textbf{x}}\textbf{F}(P_{e})$ is a seventh-order polynomial
\begin{equation}
\mathcal{P}_{7}(z)=z^{7}+a_{1}z^{6}+a_{2}z^{5}+a_{3}z^{4}+a_{4}z^{3}+a_{5}z^{2}+a_{6}z+a_{7}\label{characteristicpolynomiallogistic}
\end{equation}
where the coefficients $a_{j}$, ($j=1,2,\cdots,7$) are cumbersome expressions of the entries in $D_{\textbf{x}}\textbf{F}(P_{e})$. See \ref{Appendix B}.\\
\newline
Simple computation yields
\begin{equation}
a_{7}=-\vert D_{\textbf{x}}\textbf{F}(P_{e})\vert=-U_{e}\sigma^{6}\big[\varepsilon(\theta-1)-(\theta+1)^{2}\big]\nonumber
\end{equation}
where $\theta$ is defined by means of (\ref{equilibriumlogisticd}). We first notice that the coordinates (\ref{equilibriumlogistica})-(\ref{equilibriumlogisticc}) of the equilibrium point $P_{e}$ approach infinity when $a_{7}\rightarrow 0$. In the further discussion we proceed as follows: Introduce the function $\Psi:[0,1)\cup(1,+\infty)\rightarrow\mathbf{R}$ defined by 
\begin{equation}
\Psi[\theta]\equiv\frac{(\theta+1\big)^{2}}{\theta-1}\nonumber
\end{equation}
The points in the $\theta,\varepsilon$-plane satisfying the condition $\vert D_{\textbf{x}}\textbf{F}(P_{e})\vert<0$ are given by $0\leq\theta<1,\varepsilon>\Psi[\theta]$, the vertical line $\theta=1$ and the region $\theta>1,\varepsilon<\Psi[\theta]$. The complementary subset in the $\theta,\varepsilon$-plane obeys the condition $\vert D_{\textbf{x}}\textbf{F}(P_{e})\vert>0$. According to the definition (\ref{equilibriumlogisticd}), we must have $\theta\geq 0$.
The latter regime corresponds to unstable equilibrium states, since in this case $a_{7}<0$ which means that there is at least one positive zero of the characteristic polynomial $\mathcal{P}_{7}$.\\
\newline
Assume now that $\vert D_{\textbf{x}}\textbf{F}(P_{e})\vert<0$. For the purpose of carrying out the stability analysis of the nontrivial equilibrium point $P_{e}$ in this case we first collect the  Routh-Hurwitz matrices $\mathbf{D}_{j}$, ($j=1,2,\cdots,7$) of $\mathcal{P}_{7}$, i.e., 
\begin{eqnarray}
&\mathbf{D}_{1}=[a_{1}],\quad \mathbf{D}_{2}=\left[
  \begin{array}{cccccccc}
a_{1}& a_{3}\\
1& a_{2}
    \end{array}
\right],\quad \mathbf{D}_{3}=\left[
  \begin{array}{cccccccc}
a_{1}& a_{3}&a_{5}\\
1& a_{2}& a_{4}\\
0& a_{1}&a_{3}
    \end{array}
\right]&\nonumber\\
& \mathbf{D}_{4}=\left[
  \begin{array}{cccccccc}
a_{1}& a_{3}&a_{5}&a_{7}\\
1& a_{2}& a_{4}& a_{6}\\
0& a_{1}&a_{3}& a_{5}\\
0& 1& a_{2} & a_{4}
    \end{array}
\right],\quad  \mathbf{D}_{5}=\left[
  \begin{array}{cccccccc}
a_{1}& a_{3}&a_{5}&a_{7}&0\\
1& a_{2}& a_{4}&a_{6}&0\\
0& a_{1}&a_{3}& a_{5}&a_{7}\\
0& 1& a_{2} & a_{4}&a_{6}\\
0& 0&a_{1}&a_{3}&a_{5}
    \end{array}
\right]&\nonumber\\
&\mathbf{D}_{6}=\left[
  \begin{array}{cccccccc}
a_{1}& a_{3}&a_{5}&a_{7}&0&0\\
1& a_{2}& a_{4}&a_{6}&0&0\\
0& a_{1}&a_{3}& a_{5}&a_{7}&0\\
0& 1& a_{2} & a_{4}&a_{6}&0\\
0& 0&a_{1}&a_{3}&a_{5}& a_{7}\\
0 & 0&1&a_{2}&a_{4}&a_{6}  \end{array}
\right],\quad 
\mathbf{D}_{7}=\left[
  \begin{array}{cccccccc}
a_{1}& a_{3}&a_{5}&a_{7}&0&0&0\\
1& a_{2}& a_{4}&a_{6}&0&0&0\\
0& a_{1}&a_{3}& a_{5}&a_{7}&0&0\\
0& 1& a_{2} & a_{4}&a_{6}&0&0\\
0& 0&a_{1}&a_{3}&a_{5}& a_{7}&0\\
0 & 0&1&a_{2}&a_{4}&a_{6} &0\\
0&0&0&a_{1}&a_{3}&a_{5}& a_{7}
 \end{array}
\right]&\nonumber
\end{eqnarray}
and then compute the corresponding Routh-Hurwitz determinants $\vert\mathbf{D}_{j}\vert$, ($j=1,2,\cdots,7$). From the Routh-Hurwitz criterion it follows that all the zeros of $\mathcal{P}_{7}$ are located in the left $z$-plane if and only if $\vert\mathbf{D}_{j}\vert>0$ for all $j=1,2,\cdots,7$. See \cite{hurwitz1964conditions}. This means that equilibrium point $P_{e}$ is asymptotically stable if and only if $\vert\mathbf{D}_{i}\vert>0$ for all $j=1,2,\cdots,7$.\\
\newline
We first examine the situation when $\varepsilon=0$.  The Jacobian matrix of the vector field $\mathbf{F}$ evaluated at the equilibrium point $P_{e}(\varepsilon=0)$ becomes the block lower triangular matrix
\begin{equation}
D_{\textbf{x}}\textbf{F}_{0}(P_{e}(\varepsilon=0))
=\left[
  \begin{array}{cccccccc}
0& 0& -r& 0& 0&0&0 \\
\sigma& -\sigma&0&0&0&0&0\\
0&\sigma&-\sigma&0&0&0&0\\
\sigma&0&0&-\sigma&-\Omega&0&0\\
0&0&0&\Omega&-\sigma&0&0\\
0&0&0&\sigma&0&-\sigma&-\Omega\\
0&0&0&0&\sigma&\Omega&-\sigma
    \end{array}
\right]\label{Jacobian7D0}
\end{equation}
in this case. We explore the spectral properties of this matrix in order to assess the linear stability of the equilibrium point $P_{e}(\varepsilon=0)$.  A notable feature of the matrix (\ref{Jacobian7D0})  is its block structure containing the $3\times 3$ upper left block matrix $\mathbb{J}_{33}$ defined by (\ref{Jacobian3D})
and the $4\times 4$ lower right block matrix
\begin{equation}
\mathbb{J}_{44}=\left[
  \begin{array}{cccccccc}
-\sigma&-\Omega&0&0\\
\Omega&-\sigma&0&0\\
\sigma&0&-\sigma&-\Omega\\
0&\sigma&\Omega&-\sigma
    \end{array}
\right]\nonumber
\end{equation}
The block structure of the Jacobian matrix $D_{\textbf{x}}\textbf{F}_{0}(P_{e}(\varepsilon=0))$ implies that the characteristic polynomial $\mathcal{P}_{7}$ of  (\ref{Jacobian7D0}) can be factorized as
\begin{equation}
\mathcal{P}_{7}(z)=\mathcal{P}_{3}(z)\cdot[\mathcal{P}_{2}(z)]^{2}\nonumber
\end{equation}
where $\mathcal{P}_{3}$ is given as the characteristic polynomial (\ref{characteristicpol3}) and $\mathcal{P}_{2}$ is defined by (\ref{P2}).
Since $z=z_{\pm}\equiv-\sigma\pm i\Omega$ are zeros of order $2$ of the characteristic polynomial $\mathcal{P}_{7}$ and that $Re[z_{\pm}]=-\sigma<0$, we conclude that the linear stability of the equilibrium $P_{e}(\varepsilon=0)$ is as expected  determined by the spectral properties of the Jacobian $\mathbb{J}_{33}$: For $\sigma>\frac{1}{2}r$ ($0<\sigma<\frac{1}{2}r$) the equilibrium point will be asymptotically stable (unstable).\\
\newline 
In order to investigate the impact of the oscillatory time history on the Hopf bifurcation problem characterized by $\sigma=\frac{1}{2}r$, we view  the delay parameter $\sigma$ as a control parameter. The actual impact is stored in the amplitude parameter $\varepsilon$. Following the ideas in \cite{shen1995new,jing1993qualitative, Nordbo2007}, the existence of a generic Hopf-bifurcation concerns the study of the equation
\begin{equation}
\vert\mathbf{D}_{6}\vert(\sigma,\varepsilon)=0\nonumber
\end{equation}
For $\varepsilon=0$, this equation will have the unique solution $\sigma=\frac{1}{2}r$. Moreover, the non-transversality condition 
$\partial_{\sigma}\big[\vert\mathbf{D}_{6}\vert\big](\frac{1}{2}r,0)\neq 0$ is fulfilled. See \ref{Appendix B}. The Routh-Hurwitz determinant $\vert\mathbf{D}_{6}\vert$ is a smooth function in an open neighborhood of $(\frac{1}{2}r,0)$. Hence, by appealing to the implicit function theorem there is a unique $C^{1}$-function $\phi:I_{0}\rightarrow\mathbb{R}_{+}$ where $I_{0}$ is an open $\varepsilon$-interval about $0$ for which we have
\begin{subequations}
\label{GenHopf}
\begin{eqnarray}
&\vert\mathbf{D}_{6}\vert(\sigma,\varepsilon)=0, \quad\varepsilon\in I_{0}&\label{GenHopfa}\\
&&\nonumber\\
&\sigma=\phi(\varepsilon)=\frac{1}{2}r-\bigg(\partial_{\sigma}\big[\vert\mathbf{D}_{6}\vert\big](\frac{1}{2}r,0)\bigg)^{-1}\partial_{\varepsilon}\big[\vert\mathbf{D}_{6}\vert\big]
(\frac{1}{2}r,0)\cdot\varepsilon+\mathcal{O}(\varepsilon^{2})&\label{GenHopfb}
\end{eqnarray}
\end{subequations}
In accordance with \ref{Appendix B} all the Routh-Hurwitz determinants $\vert\mathbf{D}_{j}\vert$ ($j=1,2,\cdots,5$) are smooth and strictly positive functions of $(\sigma,\varepsilon)$ in an open neighborhood of $(\frac{1}{2}r,0)$. This result together with (\ref{GenHopf}) enable us to conclude that the generic Hopf-bifurcation located at $\sigma=\frac{1}{2}r$ for $\varepsilon=0$ extends to the finite $\varepsilon$-regime.\\
\newline
\begin{figure}
    \centering
    \includegraphics[width=0.8\linewidth]{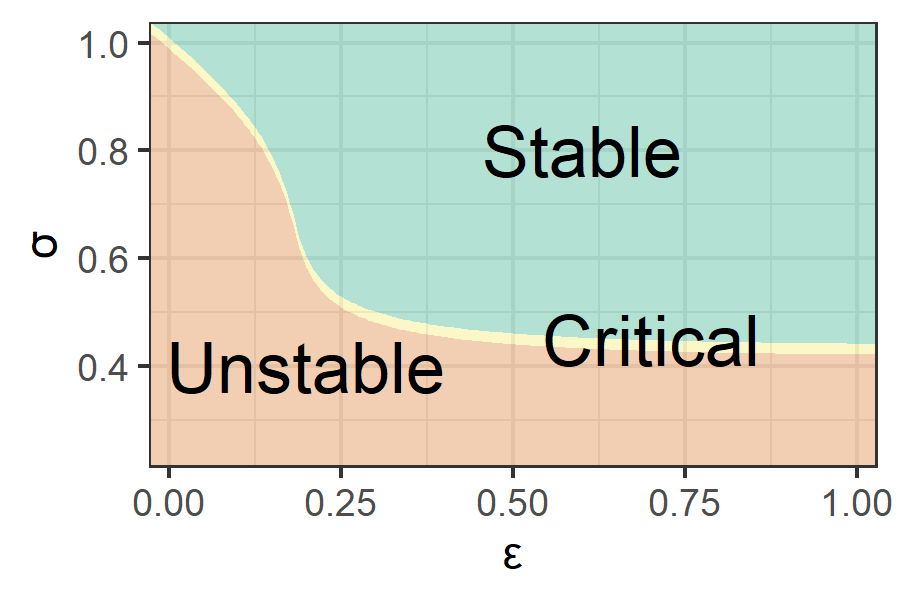}
    \caption{Phase diagram of the logistic equation with oscillations in the time history for different values of $\varepsilon$ and $\sigma$. The input parameters are $r=2$, $K=1$, and $\Omega=0.8$. The critical line satisfies (\ref{GenHopf}) as well as $|\mathbf{D}_j|>0$ for $j=1\dots 5$.}
    \label{Fig:PhaseDiagramLogistic}
\end{figure}
\begin{figure}
    \centering
    \includegraphics[width=\linewidth]{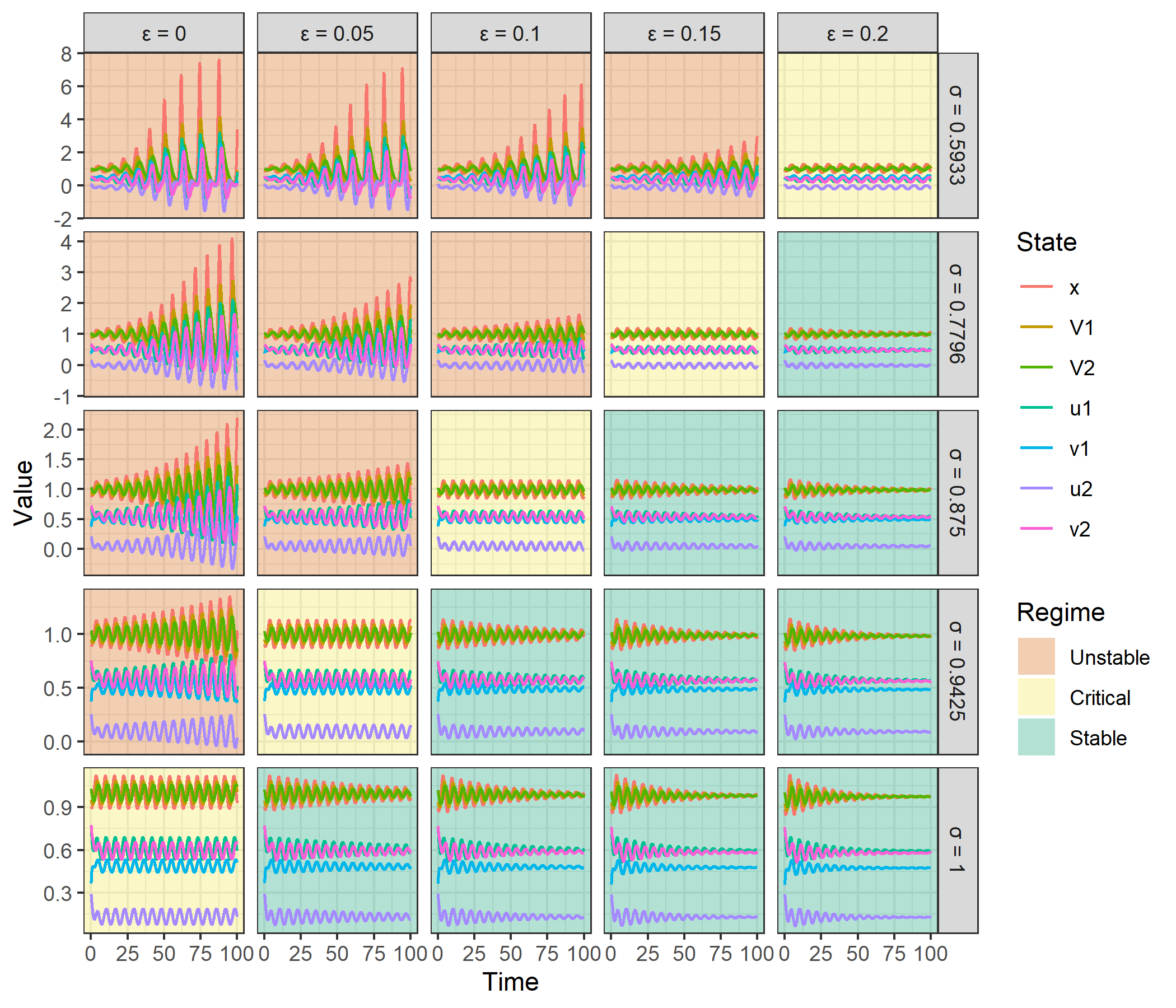}
    \caption{Numerical solution of the logistic equation with oscillations in the time history for different values of $\varepsilon$ and $\sigma$. That is, the system of equations (\ref{rate1a}), (\ref{rate2}) and (\ref{logisticdelay}). The input parameters are $r=2$, $K=1$, and $\Omega=0.8$. The combinations of $\varepsilon$ and $\sigma$ are chosen along the critical line in Figure \ref{Fig:PhaseDiagramLogistic}, and the background color is based on that figure as well. Note how the oscillations of all state variables increase in magnitude in the unstable regime, but decrease in the stable one.}
    \label{Fig:NumericalLogistic}
\end{figure}
 The oscillations generated at the Hopf-bifurcation point for $\varepsilon=0$-case will continuously deform to oscillations in the perturbed case $0<\varepsilon\ll 1$. Here we will stress the importance of studying numerically the oscillatory effect in the memory kernel on the Hopf-bifurcation: In the phase diagram depicted in Fig.\ref{Fig:PhaseDiagramLogistic} the points in the green colored region satisfies $\vert\mathbf{D}_{6}\vert(\sigma,\varepsilon)>0$, which means that the corresponding equilibrium states are asymptotically stable, whereas points in the complementary red colored region satisfy $\vert\mathbf{D}_{6}\vert(\sigma,\varepsilon)<0$. The latter regime produces unstable equilibrium states. The yellow colored separatrix curve which is given by $\vert\mathbf{D}_{6}\vert(\sigma,\varepsilon)=0$ yields points in the $\sigma,\varepsilon$-plane representing Hopf-bifurcation points. The input parameters used in the numerical simulations of the system (\ref{rate1a}), (\ref{rate2}) and (\ref{logisticdelay}) leading to Fig.\ref{Fig:NumericalLogistic} are based on Fig. \ref{Fig:PhaseDiagramLogistic}.\\ 
\newline
It is interesting to investigate the details of the stability characteristics of the equilibrium state (\ref{equilibriumlogistic}) and compare the outcome of this analysis with asymptotical results and numerical explorations, i.e., to see how far we can stretch the predictions based on the asymptotical analysis.
Thus, let us assume that $0<\varepsilon \ll 1$, which means that the effect of oscillations in the integral kernel is assumed to be weak. We decompose the vector field $\mathbf{F}$ defining the dynamical system as
\begin{equation}
\mathbf{F}(\mathbf{X},\varepsilon)=\mathbf{F}^{(0)}(\mathbf{X})+\varepsilon\mathbf{F}^{(1)}(\mathbf{X})\nonumber
\end{equation}
where
\begin{subequations}
\label{unperturbed}
\begin{eqnarray} 
&\mathbf{F}_{0}(\mathbf{X})=(F_{1}(\mathbf{X}),F_{2}(\mathbf{X}),F_{3}(\mathbf{X}),F_{4}(\mathbf{X}),F_{5}(\mathbf{X}),F_{6}(\mathbf{X}),F_{7}(\mathbf{X}))&\label{unperturbeda}\\
&&\nonumber\\
&F_{1}(\mathbf{X})=r x(1-\frac{V_{2}}{\bar K})&\label{unperturbedb}\\
&&\nonumber\\
&F_{2}(\mathbf{X})=-\sigma V_{1}+\sigma x,\quad F_{3}(\mathbf{X})=-\sigma V_{2}+\sigma V_{1}&\label{unperturbedc}\\
&&\nonumber\\
&F_{4}(\mathbf{X})=-\sigma u_{1}-\Omega v_{1}+\sigma x,\quad F_{5}(\mathbf{X})=-\sigma v_{1}+\Omega u_{1}&\label{unperturbedd}\\
&&\nonumber\\
&F_{6}(\mathbf{X})=-\sigma u_{2}-\Omega v_{2}+\sigma u_{1},\quad F_{7}(\mathbf{X})=-\sigma v_{2}+\Omega u_{2}+\sigma v_{1}&\label{unperturbede}
\end{eqnarray}
\end{subequations}
and 
\begin{equation}
\mathbf{F}^{(1)}(\mathbf{X})=(-r\frac{xu_{2}}{\bar K},0,0,0,0,0,0)\label{perturbedcomp}
\end{equation}
The dynamical system
\begin{equation}
\dot{\mathbf{X}}=\mathbf{F}^{(0)}(\mathbf{X})+\varepsilon\mathbf{F}^{(1)}(\mathbf{X}),\quad 0<\varepsilon\ll 1\label{perturbeddynsyst}
\end{equation}
can be dealt with by using regular perturbation theory, see \cite{Vasileva1995}. The asymptotic approximation of the solution to this system is given as
\begin{equation}
\mathbf{X}(t)=\mathbf{X}_{0}(t)+\varepsilon\mathbf{X}_{1}(t)+\mathcal{O}(\varepsilon^{2})\nonumber
\end{equation}
where the terms satisfy the hierarchy of ODEs
\begin{subequations}
\label{1}
\begin{eqnarray}
&\dot{\mathbf{X}}_{0}=\mathbf{F}^{(0)}(\mathbf{X}_{0})&\label{1a}\\
&&\nonumber\\
&\dot{\mathbf{X}}_{1}=D_{\mathbf{X}}\mathbf{F}^{(0)}(\mathbf{X}_{0})\cdot\mathbf{X}_{1}+\mathbf{F}^{(1)}(\mathbf{X}_{0})&\label{1b}\\
&\vdots&\nonumber
\end{eqnarray}
\end{subequations}
Here $D_{\mathbf{X}}\mathbf{F}^{(0)}(\mathbf{X}_{0})$ denotes the Jacobian of the vector field $\mathbf{F}^{(0)}$ evaluated at the leading order solution $\mathbf{X}_{0}$.
The unperturbed equilibrium state  $P_{e}(\varepsilon=0)$ is a hyperbolic equilibrium in the parameter regime $\sigma\neq\frac{1}{2}r$. In that case the perturbed system (\ref{perturbeddynsyst}) possesses an equilibrium point $P_{e}(\varepsilon\neq 0)$ which appears as a smooth $\varepsilon$-deformation of the equilibrium point $P_{e}(\varepsilon=0)$ of the unperturbed system (\ref{1a}) and which also is hyperbolic. The Hartman-Grobmans theorem implies that the phase portrait of the unperturbed system (\ref{1a}) in the vicinity of $P_{e}(\varepsilon=0)$ is mapped one-to-one and onto the phase portrait of the unperturbed  linearized system
\begin{equation}
\dot{\mathbf{Y}}_{0}=D_{\textbf{x}}\textbf{F}^{(0)}(P_{e}(\varepsilon=0))\cdot\mathbf{Y}_{0}\label{linearizedunpert}
\end{equation}
in the vicinity of $\mathbf{Y}_{0}=\mathbf{0}$. We notice that the phase portrait of (\ref{linearizedunpert}) is mapped one-to-one and onto the phase portrait of the perturbed linearized system
\begin{equation}
\dot{\mathbf{Y}}_{\varepsilon}=D_{\textbf{x}}\textbf{F}(P_{e}(\varepsilon\neq 0))\cdot\mathbf{Y}_{\varepsilon},\quad 0<\varepsilon\ll 1\label{linearizedpert}
\end{equation}
in the vicinity of $\mathbf{Y}_{\varepsilon}=\mathbf{0}$. Hartman-Grobmans theorem then implies that the latter phase portrait is mapped one-to-one and onto the phase portrait of the perturbed system (\ref{perturbeddynsyst}) in the vicinity of $P_{e}(\varepsilon\neq 0)$.
Thus the phase portrait of the perturbed system  (\ref{perturbeddynsyst}) in the vicinity of the perturbed equilibrium point appears as a smooth $\varepsilon$-deformation of the corresponding unperturbed system (\ref{1a}) in the vicinity of $P_{e}(\varepsilon=0)$. \\
\newline
To sum up: We make use of the  Hartman-Grobmans theorem to conclude that the local dynamics about $P_{e}(\varepsilon=0)$ in the hyperbolic case $\sigma\neq\frac{1}{2}r$ will continuously deform to the local dynamics about $P_{e}(\varepsilon\neq 0)$ ($0<\varepsilon\ll 1$).  We have also shown that the Hopf-bifurcation point $\sigma=\frac{1}{2}r,\varepsilon=0$ will continuously deform to a Hopf-bifurcation in the perturbed case  $0<\varepsilon\ll 1$ in accordance with the violation of the Routh-Hurwitz criterion embodied in (\ref{GenHopf}). Both these results are consistent with the fact that the mapping from the Banach space of absolute integrable memory functions to the Banach space of absolute continuous solutions of the distributed delay system (\ref{logisticdistributeddelay}) is continuous with respect to the metrics (Theorem \ref{stabilitytheorem} in \ref{Continuous dependence on the time history}).

\section{Linear Chain Trick for systems with oscillating dependence on the time history}\label{LCT}

Motivated by the examples elaborated in the previous section, we will now work out the LCT-algorithm for a general dynamical system with oscillations in the time history. Here we follow and generalize the set-up in \cite{ponosov2004thew}. We consider a $D$-dimensional real-valued distributed delay system on the form
\begin{equation}\label{system1}
\left.\begin{aligned} 
&\dot{\textbf{x}}(t)=\textbf{F}(\textbf{x}(t),(\Re{\textbf{x}})(t)),\quad t\in\mathcal{I}_{+}\equiv (0,t^{\ast})&\\
&\textbf{x}(\tau)=\textbf{u}(\tau),\tau\in\mathcal{I}_{-}\equiv (-\infty,0]&
\end{aligned} \right\}
\end{equation}
where $\textbf{u}:(-\infty,0]\rightarrow\mathbb{R}^{D}$ is a known vector valued initial function, $\textbf{x}(t)=[x_{1}(t), x_{2}(t),\cdots, x_{D}(t)]^{T}\in \mathbb{R}^{D}$ is the solution of the system, and $0<t^{\ast}\leq\infty$. The delay operator $\Re$ is defined as
\begin{equation}
\label{system2}
\left.\begin{aligned}
&(\Re{\textbf{x}})(t)=\sum_{k=1}^{N}c_{k}\textbf{z}_{k}(t),\quad \textbf{z}_{k}(t)=\int\limits_{-\infty}^{t}\alpha_{k}(t-s)\textbf{x}_{d}(s)ds\in\mathbb{R}^{D}&\\
&\textbf{x}_{d}(t)\equiv\mathbb{J}\cdot\textbf{x}(t)=[x_{1}(t), x_{2}(t),\cdots, x_{d}(t),0,0,\cdots,0]^{T}&\\
&\alpha_{k}(t)=\frac{\sigma^{k}}{(k-1)!}t^{k-1}e^{-\sigma t}&\\
&\hspace{80pt}\times\big[a_{k}+\sum_{n=1}^{M_{k}}(\varepsilon_{n}^{(k)}\cos(\omega_{n}^{(k)}t)+\mu_{n}^{(k)}\sin(\omega_{n}^{(k)}t))\big]&
\end{aligned} \right\}
\end{equation}
where $a_{k}$, $\varepsilon_{n}^{(k)}$, $\mu_{n}^{(k)}$ and $\omega_{n}^{(k)}$ define sequences of real numbers and $\sigma>0$. Here the expansion coefficients $c_{k}$ (k=1,2,...,N) are assumed to be real-valued. $\mathbb{J}$ is the $D\times D$-projection matrix defined by
\begin{equation}\label{matrix}
\mathbb{J}\equiv
\begin{bmatrix}
\mathbb{I}_{d}&\vdots &\mathbb{O}_{d\times (D-d)}\\
\cdots\cdots&\cdots\vdots\cdots&\cdots\cdots\\
\mathbb{O}_{(D-d)\times d}&\vdots&\mathbb{O}_{(D-d)\times (D-d)}
\end{bmatrix},\quad 1\leq d\leq D
\end{equation}
where $\mathbb{I}_{d}$ is the unit $d\times d$-matrix and $\mathbb{O}_{p\times q}$ the $p\times q$-zero matrix. This means that we have accounted for the assumption that the distributed delay effects in (\ref{system1})-(\ref{matrix}) are a function of the $d$ first coordinates of $\textbf{x}$. This means that the memory kernel in each term of the linear combination defining the delay term $\Re{\textbf{x}}$ is a perturbed Erlang distribution where the perturbation is a finite superposition of finite amplitude temporal oscillations.\\
\newline
We have tacitly assumed that the system (\ref{system1}) - (\ref{system2}) is locally well-posed, i.e., that the solution exists, is unique and depends continuously on the initial data for $t\in\mathcal{I}_{+}$. We will achieve this by assuming the function $\textbf{F}:U\rightarrow\mathbb{R}^{D}$ defining the right hand side of our system to be a smooth function. Here $U$ is an open, connected, convex Euclidean subset of the product space $\mathbb{R}^{D}\times\mathbb{R}^{D}$ for which we have $(\textbf{u}(0),(\Re{\textbf{u}})(0)),(\textbf{x}(t),(\Re{\textbf{x}})(t))\in U$ for $t\in\mathcal{I}_{+}$. See \cite{cushing2013} for details.\\
\newline
We notice that the sequence of integral kernels $\{\alpha_{k}\}_{k=1}^{N}$ can be written as
\begin{eqnarray}
&\hspace{-100pt}\alpha_{k}(t)=a_{k}\theta_{k}(t)&\nonumber\\
&+\frac{1}{2}\sum\limits_{n=1}^{M_{k}}\varepsilon_{n}^{(k)}[\varphi_{n}^{(k)}(t)+(\varphi_{n}^{(k)})^{\ast}(t)]&\label{system3}\\
&+\frac{1}{2i}\sum\limits_{n=1}^{M_{k}}\mu_{n}^{(k)}[\varphi_{n}^{(k)}(t)-(\varphi_{n}^{(k)})^{\ast}(t)]&\nonumber
\end{eqnarray}
where the sequences $\{\theta_{k}\}_{k=1}^{N}$ and  $\{\varphi_{n}^{(k)}\}_{n=1}^{M_{k}}$  are defined as
\begin{equation}
\label{system4}
\left.\begin{aligned}
&\theta_{k}(t)=\frac{\sigma^{k}}{(k-1)!}t^{k-1}e^{-\sigma t}&\\
&\varphi_{n}^{(k)}(t)=\frac{\sigma^{k}}{(k-1)!}t^{k-1}e^{\lambda_{n}^{(k)} t},\quad \lambda_{n}^{(k)}\equiv-\sigma+i\omega_{n}^{(k)}, \quad (i^{2}=-1)&
\end{aligned} \right\}
\end{equation}
and the operation $\ast$ means complex conjugation. We readily find the decomposition
\begin{equation}
\label{system5}
\left.\begin{aligned}
&\textbf{z}_{k}(t)=a_{k}\textbf{V}_{k}(t)+\frac{1}{2}\sum\limits_{n=1}^{M_{k}}\varepsilon_{n}^{(k)}[\textbf{W}_{n}^{(k)}(t)+(\textbf{W}_{n}^{(k)})^{\ast}(t)]&\\
&+\frac{1}{2i}\sum\limits_{n=1}^{M_{k}}\mu_{n}^{(k)}[\textbf{W}_{n}^{(k)}(t)-(\textbf{W}_{n}^{(k)})^{\ast}(t)]\in\mathbb{R}^{D}&\\
&\textbf{V}_{k}(t)\equiv\int\limits_{-\infty}^{t}\theta_{k}(t-s)\textbf{x}_{d}(s)ds\in\mathbb{R}^{D}&\\
&\textbf{W}_{n}^{(k)}(t)\equiv\int\limits_{-\infty}^{t}\varphi_{n}^{(k)}(t-s)\textbf{x}_{d}(s)ds\in\mathbb{R}^{D}&
\end{aligned} \right\}
\end{equation}
by combining (\ref{system2}), (\ref{system3}) and (\ref{system4}). We are now in position to make use of the linear chain trick developed in \cite{ponosov2004thew}: For the sequence $\{\textbf{V}_{k}\}_{k=1}^{N}$, we have the system of ODEs given as
\begin{equation}
\label{system6}
\left.\begin{aligned}
&\dot{\textbf{V}}_{1}(t)=-\sigma \textbf{V}_{1}(t)+\sigma \textbf{x}_{d}(t)&\\
&\dot{\textbf{V}}_{k}(t)=-\sigma \textbf{V}_{k}(t)+\sigma \textbf{V}_{k-1}(t),\quad k=2,3,\cdots,N&
\end{aligned} \right\}
\end{equation}
whereas we will produce the hierarchy
\begin{equation}
\label{system7}
\left.\begin{aligned}
&\dot{\textbf{W}}_{n}^{(1)}(t)=\lambda_{n}^{(1)}\textbf{W}_{n}^{(1)}(t)+\sigma \textbf{x}_{d}(t),\quad n=1,2,3,\cdots, M_{1}&\\
&\dot{\textbf{W}}_{n}^{(k)}(t)=\lambda_{n}^{(k)} \textbf{W}_{n}^{(k)}(t)+\sigma\textbf{W}_{n}^{(k-1)}(t),&\\
& n=1,2,3,\cdots M_{k}; k=2,\cdots, N&
&\end{aligned} \right\}
\end{equation}
for the sequences of block functions $\{\{\textbf{W}_{n}^{(k)}\}_{n=1}^{M_{k}}\}_{k=1}^{N}$.\\
\newline
To sum up, the distributed delay system (\ref{system1})-(\ref{system2}) can be replaced with the higher-order autonomous dynamical system
\begin{equation}
\label{system8}
\left.\begin{aligned}
&\dot{\textbf{x}}=\textbf{F}\big(\textbf{x},\sum_{k=1}^{N}c_{k}[a_{k}\textbf{V}_{k}+\sum_{n=1}^{M_{k}}(\varepsilon_{n}^{(k)}\textbf{u}_{n}^{(k)}+\mu_{n}^{(k)}\textbf{v}_{n}^{(k)})]\big)&\\
&t\in\mathcal{I}_{+}\equiv (0, t^{\ast})&\\
&\dot{\textbf{V}}_{1}=-\sigma \textbf{V}_{1}+\sigma \textbf{x}_{d}&\\
&\dot{\textbf{V}}_{k}=-\sigma \textbf{V}_{k}+\sigma \textbf{V}_{k-1}&\\ 
&k=2,3,\cdots,N&\\
&\dot{\textbf{u}}_{n}^{(1)}=-\omega_{n}^{(1)}\textbf{v}_{n}^{(1)}-\sigma\textbf{u}_{n}^{(1)}+\sigma \textbf{x}_{d}&\\
& n=1,2,\cdots, M_{1}&\\
&\dot{\textbf{u}}_{n}^{(k)}=-\omega_{n}^{(k)}\textbf{v}_{n}^{(k)}-\sigma\textbf{u}_{n}^{(k)}+\sigma \textbf{u}_{n}^{(k-1)}&\\
&n=1,2,3,\cdots M_{k},\quad k=2,3,\cdots, N&\\
&\dot{\textbf{v}}_{n}^{(1)}=\omega_{n}^{(1)}\textbf{u}_{n}^{(1}-\sigma\textbf{v}_{n}^{(1)}&\\
& n=1,2,\cdots, M_{1}&\\
&\dot{\textbf{v}}_{n}^{(k)}=\omega_{n}^{(k)}\textbf{u}_{n}^{(k)}-\sigma\textbf{v}_{n}^{(k)}+\sigma \textbf{v}_{n}^{(k-1)}&\\
& n=1,2,3,\cdots M_{k},\quad k=2,3,\cdots, N&
\end{aligned} \right\}
\end{equation}
Here we have made use of the decomposition into real and imaginary parts: $\textbf{W}_{n}^{(k)}=\textbf{u}_{n}^{(k)}+i\textbf{v}_{n}^{(k)}$, $\textbf{u}_{n}^{(k)},\textbf{v}_{n}^{(k)}\in\mathbb{R}^{D}$. \\
\newline
The initial condition of this system is given as
\begin{equation}
\label{system9}
\left.\begin{aligned}
&\textbf{x}(0)=\textbf{u}(0)&\\
&\textbf{V}_{k}(0)=\textbf{u}_{n}^{(k)}(0)=\int\limits_{-\infty}^{0}\theta_{k}(-s)\mathbb{J}\cdot\textbf{u}(s)ds&\\
& n=1,2,3,\cdots M_{k},\quad k=1,2,\cdots, N&\\
&\textbf{v}_{n}^{(k)}(0)=\int\limits_{-\infty}^{0}\text{Im}[\varphi_{n}^{(k)}(-s)]\mathbb{J}\cdot\textbf{u}(s)ds&\\
& n=1,2,3,\cdots M_{k},\quad k=1,2,\cdots, N&
\end{aligned} \right\}
\end{equation}
Here we have assumed that the initial function $\textbf{u}$ is designed in such a way that the improper integrals in (\ref{system9}) are convergent. The phase space of the system (\ref{system8}) is the Euclidean space $\mathbb{R}^{r}$ where 
\begin{equation}
r=D+d\cdot(N+2\cdot\sum\limits_{k=1}^{N}M_{k})\quad\textrm{(the dimension formula)}.\label{dimensionformula}
\end{equation}
In the process of deriving (\ref{dimensionformula}) we have taken into account that the $D-d$ last component equations in each of the auxiliary equations in (\ref{system8}) produce the trivial solutions. This is caused by the projection matrix $\mathbb{J}$ defined by means of (\ref{matrix}).
\begin{remark}
The logistic model (\ref{logisticdistributeddelay}) with distributed delay (\ref{system3logistic})-(\ref{system4logistic}) is on the form (\ref{system2})-(\ref{matrix}). We make the identifications $\textbf{x}=[x]^{T}\in\mathbb{R}$. According to (\ref{system2})-(\ref{matrix}), we get $\textbf{x}_{d}=[x]^{T}$. We have $D=1$, $d=1$, $N=2$, $c_{k}=0$ for all $k\neq 2$ and $c_{2}=a_{2}=1$. Since we have only one frequency component for each $k$, we get $M_{k}=1$ ($k=1,2$). From (\ref{dimensionformula}) we find that the dimension $r$ of the phase space of the autonomous dynamical system produced by the LCT algorithm will be $r=1+1\cdot (2+2\cdot(1+1))=7$ in this case, in agreement with the dimension of the autonomous dynamical system (\ref{rate1})-(\ref{logisticdelay}).
\end{remark}

\begin{remark}
The LCT-procedure is also applicable when the coefficients $c_{1},c_{2},\cdots,c_{N}$ in (\ref{system2}) are time dependent, i.e., $c_{k}=c_{k}(t)$. The resulting system of ODEs which is on the form (\ref{system8}) will be non-autonomous in this case. 
\end{remark}

The next problem to be addressed is the existence of equilibrium states (and their respective stability) of the system (\ref{system8}). We introduce the vector field $\textbf{G}:\mathbb{R}^{r}\rightarrow\mathbb{R}^{r}$  and the state vector $\textbf{X}$ defined by (\ref{matrix})
\begin{equation}
\label{system10}
\left.\begin{aligned}
&\textbf{G}(\textbf{X})\equiv[\textbf{F}(\textbf{X}),\textbf{A}(\textbf{X}),
\textbf{B}(\textbf{X}),\textbf{C}(\textbf{X})]&\\
&&\\
&\textbf{X}=[\textbf{x},\textbf{V}_{1},\cdots,\textbf{V}_{N},\textbf{u}_{1},\cdots,\textbf{u}_{N},\textbf{v}_{1},\cdots,\textbf{v}_{N}]&\\
&&\\
&\textbf{u}_{k}\equiv[\textbf{u}_{1}^{(k)},\textbf{u}_{2}^{(k)},\cdots\textbf{u}_{M_{k}}^{(k)}],\quad n=1,2,\cdots M_{k},\quad k=1,2,3,\cdots, N&\\
&&\\
&\textbf{v}_{k}\equiv[\textbf{v}_{1}^{(k)},\textbf{v}_{2}^{(k)},\cdots\textbf{v}_{M_{k}}^{(k)}],\quad n=1,2,\cdots M_{k},\quad k=1,2,3,\cdots, N&
\end{aligned} \right\}
\end{equation}
Here
\begin{equation}
\label{system11}
\left.\begin{aligned}
&\textbf{F}(\textbf{X})\equiv\textbf{F}(\textbf{x},\sum_{k=1}^{N}c_{k}[a_{k}\textbf{V}_{k}+\sum_{n=1}^{M_{k}}(\varepsilon_{n}^{(k)}\textbf{u}_{n}^{(k)}+\mu_{n}^{(k)}\textbf{v}_{n}^{(k)})])&\\
&&\\
&\textbf{A}(\textbf{X})\equiv[\textbf{A}_{1}(\textbf{X}),\textbf{A}_{2}(\textbf{X}),\cdots,\textbf{A}_{N}(\textbf{X})]&\\
&&\\
&\textbf{B}(\textbf{X})\equiv[\textbf{B}_{1}(\textbf{X}),\textbf{B}_{2}(\textbf{X}),\cdots,\textbf{B}_{N}(\textbf{X})]&\\
&&\\
&\textbf{B}(\textbf{X})\equiv[\textbf{B}_{1}(\textbf{X}),\textbf{B}_{2}(\textbf{X}),\cdots,\textbf{B}_{N}(\textbf{X})]&
\end{aligned} \right\}
\end{equation}
The collection of $\textbf{A}$ - vectors is defined as
\begin{equation}
\label{system12}
\left.\begin{aligned}
&\textbf{A}_{1}(\textbf{X})\equiv-\sigma \textbf{V}_{1}+\sigma \textbf{x}_{d}&\\
&&\\
&\textbf{A}_{k}(\textbf{X})\equiv-\sigma \textbf{V}_{k}+\sigma \textbf{V}_{k-1},\quad k=2,3,\cdots,N&
\end{aligned} \right\}
\end{equation}
The $\textbf{B}$ - vectors are defined in the following manner:
\begin{equation}
\label{system13}
\left.\begin{aligned}
&\textbf{B}_{1}(\textbf{X})\equiv[\textbf{B}_{1}^{(1)}(\textbf{X}),\textbf{B}_{2}^{(1)}(\textbf{X}),\cdots,\textbf{B}_{M_{1}}^{(1)}(\textbf{X})]&\\
&&\\
&\textbf{B}_{2}(\textbf{X})\equiv[\textbf{B}_{1}^{(2)}(\textbf{X}),\textbf{B}_{2}^{(2)}(\textbf{X}),\cdots,\textbf{B}_{M_{2}}^{(2)}(\textbf{X})]&\\
&\vdots&\\
&\textbf{B}_{N}(\textbf{X})\equiv[\textbf{B}_{1}^{(N)}(\textbf{X}),\textbf{B}_{2}^{(N)}(\textbf{X}),\cdots,\textbf{B}_{M_{N}}^{(N)}(\textbf{X})]\\
&&\\
&\textbf{B}_{n}^{(1)}(\textbf{X})\equiv-\omega_{n}^{(1)}\textbf{v}_{n}^{(1)}-\sigma\textbf{u}_{n}^{(1)}+\sigma \textbf{x}_{d}&\\
& n=1,2,3,\cdots,M_{1}&\\
&&\\
&\textbf{B}_{n}^{(k)}(\textbf{X})\equiv-\omega_{n}^{(k)}\textbf{v}_{n}^{(k)}-\sigma\textbf{u}_{n}^{(k)}+\sigma \textbf{u}_{n}^{(k-1)}&\\
& n=1,2,3,\cdots M_{k},\quad k=2,\cdots, N&
\end{aligned} \right\}
\end{equation}
Likewise for the $\textbf{C}$ - vectors we have
\begin{equation}
\label{system14}
\left.\begin{aligned}
&\textbf{C}_{1}(\textbf{X})\equiv[\textbf{C}_{1}^{(1)}(\textbf{X}),\textbf{C}_{2}^{(1)}(\textbf{X}),\cdots,\textbf{C}_{M_{1}}^{(1)}(\textbf{X})]&\\
&&\\
&\textbf{C}_{2}(\textbf{X})\equiv[\textbf{C}_{1}^{(2)}(\textbf{X}),\textbf{C}_{2}^{(2)}(\textbf{X}),\cdots,\textbf{C}_{M_{2}}^{(2)}(\textbf{X})]&\\
&\vdots&\\
&\textbf{C}_{N}(\textbf{X})\equiv[\textbf{C}_{1}^{(N)}(\textbf{X}),\textbf{C}_{2}^{(N)}(\textbf{X}),\cdots,\textbf{C}_{M_{N}}^{(N)}(\textbf{X})]&\\
&&\\
&\textbf{C}_{n}^{(1)}(\textbf{X})\equiv\omega_{n}^{(1)}\textbf{u}_{n}^{(1)}-\sigma\textbf{v}_{n}^{(1)}&\\
& n=1,2,3,\cdots,M_{1}&\\
&&\\
&\textbf{C}_{n}^{(k)}(\textbf{X})\equiv\omega_{n}^{(k)}\textbf{u}_{n}^{(k)}-\sigma\textbf{v}_{n}^{(k)}+\sigma \textbf{v}_{n}^{(k-1)}&\\
& n=1,2,3,\cdots M_{k},\quad k=2,\cdots, N&
\end{aligned} \right\}
\end{equation}
The dynamical system $\dot{\textbf{X}}=\textbf{G}(\textbf{X})$ is the key object of our study. Let us first investigate the possibility of having at least one equilibrium point 
\begin{equation}
\textbf{X}_{e}= [\textbf{x}_{e},\textbf{V}_{1,e},\cdots,\textbf{V}_{N,e},\textbf{u}_{1,e},\cdots,\textbf{u}_{N,e},\textbf{v}_{1,e},\cdots,\textbf{v}_{N,e}]\label{equilibriumGsystem}
\end{equation}
of this system. Here the sequences $\{\textbf{u}_{k,e}\}_{k=1}^{N}$ and $\{\textbf{u}_{k,e}\}_{k=1}^{N}$ are given by means of (\ref{system10}).  If it exists, it must satisfy $\textbf{G}(\textbf{X}_{e})=\textbf{0}$. We get the following result:

\begin{thm} Let $\{\{\delta_{n}^{(k)}\}_{n=1}^{M_{k}}\}_{k=1}^{N}$ be the complex-valued sequence defined by
\begin{equation}
\delta_{n}^{(k)}\equiv\frac{(-\sigma)^{k}}{\prod\limits_{j=1}^{k}\lambda_{n}^{(j)}}=\frac{\sigma^{k}\prod\limits_{j=1}^{k}(\sigma+i\omega_{n}^{(j)})}{\prod\limits_{j=1}^{k}(\sigma^{2}+[\omega_{n}^{(j)}]^{2})},\quad \lambda_{n}^{(j)}\equiv-\sigma+i\omega_{n}^{(j)}\label{complexsequence}
\end{equation}
and assume that the component vector $\textbf{x}=\textbf{x}_{e}$ satisfies the condition
\begin{equation}
\textbf{F}(\textbf{x}_{e},\Re{\textbf{x}_{e}})=\textbf{0}
\end{equation}
where 
\begin{equation}
\Re{\textbf{x}_{e}}=\bigg(\sum_{k=1}^{N}c_{k}\big[a_{k}+\sum_{n=1}^{M_{k}}
(\varepsilon_{n}^{(k)}Re(\delta_{n}^{(k)})+\mu_{n}^{(k)}Im(\delta_{n}^{(k)}))\big]\bigg)\mathbb{J}\cdot\textbf{x}_{e}
\end{equation}
and 
\begin{equation}
\label{componentvectorsW}
\left.\begin{aligned}
&\textbf{V}_{k,e}=\textbf{x}_{d}\equiv\mathbb{J}\cdot\textbf{x}_{e},\quad k=1,2,\cdots N&\\
&&\\
&\textbf{W}_{n,e}^{(k)}\equiv\textbf{u}_{n,e}^{(k)}+i\textbf{v}_{n,e}^{(k)}\equiv\delta_{n}^{(k)}\mathbb{J}\cdot\textbf{x}_{e}&\\
& n=1,2,\cdots M_{k},\quad k=1,2,\cdots, N&
\end{aligned} \right\}
\end{equation}
Then $\textbf{X}_{e}$ defined by (\ref{equilibriumGsystem}) is an equilibrium point of the dynamical system $\dot{\textbf{X}}=\textbf{G}(\textbf{X})$.
\end{thm}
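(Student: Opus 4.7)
The plan is to verify component by component that $\mathbf{G}(\mathbf{X}_e)=\mathbf{0}$ using the decomposition $\mathbf{G}=[\mathbf{F},\mathbf{A},\mathbf{B},\mathbf{C}]$ from (\ref{system10})--(\ref{system14}). The four blocks are essentially independent once the candidate equilibrium coordinates in (\ref{componentvectorsW}) are fixed, so the argument reduces to three short checks.

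First I would handle the $\mathbf{A}$-block. With the ansatz $\mathbf{V}_{k,e}=\mathbb{J}\cdot\mathbf{x}_{e}=\mathbf{x}_{d}$ for every $k=1,\ldots,N$, equations (\ref{system12}) give
\begin{equation}
\mathbf{A}_{1}(\mathbf{X}_{e})=-\sigma\mathbf{V}_{1,e}+\sigma\mathbf{x}_{d}=\mathbf{0},\qquad \mathbf{A}_{k}(\mathbf{X}_{e})=-\sigma\mathbf{V}_{k,e}+\sigma\mathbf{V}_{k-1,e}=\mathbf{0},\nonumber
\end{equation}
so the $\mathbf{V}$-chain is satisfied trivially.

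Second, I would recombine $\mathbf{B}_{n}^{(k)}$ and $\mathbf{C}_{n}^{(k)}$ into the complex rate equation $\dot{\mathbf{W}}_{n}^{(k)}=\lambda_{n}^{(k)}\mathbf{W}_{n}^{(k)}+\sigma\mathbf{W}_{n}^{(k-1)}$ from (\ref{system7}), with the convention $\mathbf{W}_{n,e}^{(0)}\equiv\mathbf{x}_{d}$. Setting the right-hand side to zero and solving iteratively yields the telescoping product
\begin{equation}
\mathbf{W}_{n,e}^{(k)}=-\frac{\sigma}{\lambda_{n}^{(k)}}\mathbf{W}_{n,e}^{(k-1)}=\Bigl(\prod_{j=1}^{k}\frac{-\sigma}{\lambda_{n}^{(j)}}\Bigr)\mathbb{J}\cdot\mathbf{x}_{e}=\delta_{n}^{(k)}\,\mathbb{J}\cdot\mathbf{x}_{e},\nonumber
\end{equation}
which matches (\ref{componentvectorsW}). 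Since $\mathbf{x}_{e}$ is real, taking real and imaginary parts gives $\mathbf{u}_{n,e}^{(k)}=\operatorname{Re}(\delta_{n}^{(k)})\mathbb{J}\cdot\mathbf{x}_{e}$ and $\mathbf{v}_{n,e}^{(k)}=\operatorname{Im}(\delta_{n}^{(k)})\mathbb{J}\cdot\mathbf{x}_{e}$, so $\mathbf{B}_{n}^{(k)}(\mathbf{X}_{e})=\mathbf{C}_{n}^{(k)}(\mathbf{X}_{e})=\mathbf{0}$ by construction. The second equality in (\ref{complexsequence}) would then be obtained by rationalising each factor $-\sigma/\lambda_{n}^{(j)}=-\sigma/(-\sigma+i\omega_{n}^{(j)})$ through multiplication by the complex conjugate.

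Third, I would substitute the above expressions into the delay argument appearing in $\mathbf{F}$ in (\ref{system11}). Linearity produces
\begin{equation}
\sum_{k=1}^{N}c_{k}\Bigl[a_{k}\mathbf{V}_{k,e}+\sum_{n=1}^{M_{k}}\bigl(\varepsilon_{n}^{(k)}\mathbf{u}_{n,e}^{(k)}+\mu_{n}^{(k)}\mathbf{v}_{n,e}^{(k)}\bigr)\Bigr]=\Bigl(\sum_{k=1}^{N}c_{k}\bigl[a_{k}+\sum_{n=1}^{M_{k}}\bigl(\varepsilon_{n}^{(k)}\operatorname{Re}(\delta_{n}^{(k)})+\mu_{n}^{(k)}\operatorname{Im}(\delta_{n}^{(k)})\bigr)\bigr]\Bigr)\mathbb{J}\cdot\mathbf{x}_{e},\nonumber
\end{equation}
which is exactly $\Re\mathbf{x}_{e}$ as given in the statement. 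The $\mathbf{F}$-component of $\mathbf{G}(\mathbf{X}_{e})$ therefore equals $\mathbf{F}(\mathbf{x}_{e},\Re\mathbf{x}_{e})$, which vanishes by the hypothesis. There is no genuine obstacle here: the only mildly delicate point is the telescoping argument in the second step and the careful bookkeeping of indices across the $k$-chain and the real/imaginary decomposition; everything else is a direct substitution into (\ref{system12})--(\ref{system14}).
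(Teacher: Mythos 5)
Your proposal is correct and follows essentially the same route as the paper, which simply states that ``simple computation shows'' the constant vectors $\textbf{V}_{k,e}$ and $\textbf{W}_{n,e}^{(k)}$ solve the systems (\ref{system6}) and (\ref{system7}); you have merely written out that computation explicitly (the telescoping product giving $\delta_{n}^{(k)}$, the real/imaginary split, and the substitution into the delay argument of $\textbf{F}$). No gaps.
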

\begin{proof} Simple computation shows that $\textbf{V}_{k,e}$ satisfies the system (\ref{system6}) whereas we find that the constant component vectors $\textbf{W}_{n,e}^{(k)}$ are solutions of the system (\ref{system7}). 
\end{proof}
\begin{remark}
Interestingly, the equilibrium coordinates of the auxiliary vectors  $\textbf{u}_{1,e},\cdots,\textbf{u}_{N,e},\textbf{v}_{1,e},\cdots,\textbf{v}_{N,e}$ depend on the sequence of frequencies $\{\{\omega_{n}^{(k)}\}_{n=1}^{M_{k}}\}_{k=1}^{N}$ and the decay rate $\sigma$ that parameterise the sequence of time history kernels $\{\alpha_{k}\}_{k=1}^{N}$. We also observe that $\vert\delta_{n}^{(k)}\vert\leq 1$ ($n=1,2,\cdots M_{k};k=1,2,\cdots N$, from which it follows that the auxiliary vectors $\textbf{u}_{n,e}^{(k)}$ and $\textbf{v}_{n,e}^{(k)}$ are uniformly bounded: $\|\textbf{u}_{n,e}^{(k)}\|\leq \|\mathbb{J}\cdot\textbf{x}_{e}\|$ and $\|\textbf{v}_{n,e}^{(k)}\|\leq\|\mathbb{J}
\cdot\textbf{x}_{e}\|$. 
\end{remark}
We carry out the linear stability analysis of the equilibrium state $\textbf{X}_{e}$ in the standard way by investigating the spectral properties of the Jacobian $D_{\textbf{X}}\textbf{G}$ of the vector field $\textbf{G}$ evaluated at $\textbf{X}_{e}$, i.e., by means of the zeros of the characteristic polynomial
\begin{equation}
\mathcal{P}(z)=\vert z\mathbb{I}_{r}-D_{\textbf{X}}\textbf{G}(\textbf{X}_{e})\vert=z^{r}+b_{1}z^{r-1}+\cdots+b_{r-1}z+b_{r},\quad b_{i}\in\mathbb{R}
\end{equation}
Here $\mathbb{I}_{r}$ is the unit $r\times r$ matrix. Due to the structure of the auxiliary equations we will get a block-type structure in the Jacobian $D_{\textbf{X}}\textbf{G}(\textbf{X}_{e})$. The Jacobian of the vector field $\textbf{F}$ evaluated at $\textbf{x}_{e}$ will be one of the blocks in $D_{\textbf{X}}\textbf{G}(\textbf{X}_{e})$. In order to search for possible bifurcations one proceeds in a way analogous to what is demonstrated in the example in Section \ref{Examples}.

\begin{remark} The present formalism can be extended to situations where we have several distributed blocks with distributed delay terms by generalizing the right hand side of 
(\ref{system1}) to a function $\textbf{F}:\mathbb{R}^{D}\times\mathbb{R}^{D}\times \mathbb{R}^{D}\times\cdots\times\mathbb{R}^{D}\rightarrow\mathbb{R}^{D}$. We then get the distributed delay system
\begin{equation}\nonumber
\left.\begin{aligned} 
&\dot{\textbf{x}}(t)=\textbf{F}(\textbf{x}(t),(\Re_{d_{1}}{\textbf{x}})(t), (\Re_{d_{2}}{\textbf{x}})(t),\cdots, (\Re_{d_{m}}{\textbf{x}})(t)),\quad t\in\mathcal{I}_{+}\equiv (0,t^{\ast})&\\
&\textbf{x}(\tau)=\textbf{u}(\tau),\tau\in\mathcal{I}_{-}\equiv (-\infty,0]&
\end{aligned} \right\}
\end{equation}
where the sequence of the delay operators $\{\Re_{d_{j}}\}_{j=1}^{m}$  is of the type (\ref{system2})-(\ref{matrix}). It is also possible to work out the linear chain trick when we have general Lipschitz continuous functions $\textbf{h}_{k}:\mathbb{R}^{D}\rightarrow\mathbb{R}^{D^{\ast}}$ in the delay terms where $D^{\ast}\in\mathbb{N}$, i.e.,
\begin{equation}
\textbf{z}_{k}(t)=\int_{-\infty}^{t}\alpha_{k}(t-s)\textbf{h}_{k}(\textbf{x}(s))ds,\quad k=1,2,\cdots,N\nonumber
\end{equation}
We do not detail the linear chain trick procedure for these generalized cases.
\end{remark}
\begin{remark}
The initial value problem of (\ref{system1}) is locally well-posed. This follows from using the contraction principle for complete metric spaces (Banach's fixed-point theorem). This means that the solution depends continuously on the input data. The set of input data also includes the integral kernels in the delay terms. Notice that this conclusion is not based on the  usage of the linear chain trick. This result means that we can consider the sequence of integral kernels $\{\alpha_{k}\}_{k=1}^{N}$ defined by means of (\ref{system3}) as approximations of the true time histories: The output 
$\textbf{x}$ of the initial value problem (\ref{system1}) with such time histories can (in the space of absolute continuous functions equipped with a supremum norm) be approximated with the output $\textbf{x}_{LCT}$ obtained by means of the LCT-procedure described above. Here we tacitly assume that the integral kernels are absolute integrable on the interval $[0,+\infty)$. See Theorem $1$ in \cite{ponosov2004thew} and Chapter 7.3 in \cite{azbelev2007introduction}. For the sake of completeness we will show continuous dependence on the time histories in the distributed time delay system (\ref{system1}) in \ref{Continuous dependence on the time history} (Theorem \ref{stabilitytheorem}).
\end{remark}


\section{Concluding remarks and extensions\label{Concluding remarks and extensions}}
\subsection{Main results\label{Main results}}

In the present paper we have worked out the linear chain trick algorithm (LCT) for a class of distributed delay dynamical systems with oscillations in the time history.  We have demonstrated the application of LCT in population dynamics.
We also show analytically and numerically in our example that our results regarding the stability characteristics of the equilibrium states depend continuously on the time history, in agreement with the fact that the mapping from the Banach-space of absolute integrable memory kernels to the Banach-space of absolute continuous solution of the time delay problem is continuous with respect to the metrics (Theorem \ref{stabilitytheorem} in \ref{Continuous dependence on the time history}). A key point in our population dynamical example is that the memory kernel is given as perturbed Erlang distributions where the perturbation is a finite amplitude temporal oscillation.

\subsection{Possible extensions\label{Possible extensions}} 
Here we will list two possible extensions of the present work. 
\subsubsection{Raman scattering process\label{Raman scattering process}}
In the description of modulated wavetrains in nonlinear optics one opts for the inclusion of the Raman scattering process \cite{blow1989theoretical,laegsgaard2007mode,dudley2010supercontinuum}. This latter effect appears as distributed delay effect in time with oscillations incorporated in the memory function. The actual model assumes the form of an integro-differential equation of the nonlinear Schr\"{o}dinger type. Formally one can apply the LCT-technique to this model. The outcome is a coupled system consisting of a PDE of the nonlinear Schr\"{o}dinger type and rate equations for the auxiliary variables. This system can serve as a starting point for numerical studies of the impact of the Raman process on the optical pulse propagation.\\
\newline
It is of interest to investigate the continuous dependence of the solution on the time history for the Raman response model with respect to appropriate metrics. This is motivated by the fact that the design of time history contains (measurement) uncertainties. We conjecture that this problem can be resolved when investigating the issue of global wellposedness of the actual model. Here we will like to stress that Theorem \ref{stabilitytheorem} in \ref{Continuous dependence on the time history} applied to the finite dimensional dynamical systems of the type (\ref{system1}), and not to the PDE-type of model describing the optical pulse propagation. We will return to the continuity aspect in a future paper. 

\subsubsection{Kaldor-Kalecki economic growth model\label{Kaldor-Kalecki economic growth model}}

In economic growth theory, models of the Kaldor-Kalecki type play a prominent role. These models, which are expressed in terms of coupled nonlinear dynamical systems in the product and the capital stock variables, describe growth cycles in the framework of Keynesian macro-economic theory. In \cite{krawiec2017economic} the actual Kaldor-Kalecki model contains an absolute delay effect in the investment function. One of the outcomes of this extension is the excitation of stable cycles as a consequence of supercritical Hopf bifurcations. A further extension is proposed and discussed in \cite{guerrini2020bifurcations}. Here it is argued that it is impossible to give a precise estimate of the absolute delay time and that a realistic description therefore consists of assuming the delay effect in the investment function to be of the distributed time delay type. The time histories used in \cite{guerrini2020bifurcations} is of the Erlang type with no temporal oscillations incorporated, i.e., of 'weak'- and 'strong' type.  Also in this case supercritical Hopf-bifurcations are detected. In a future work we will incorporate temporal oscillations in the time history, in line with the general framework (\ref{system1})-(\ref{matrix}).

\section*{Data Availability}
No data was used in the present study. The figures are based on simple numeric solutions of the differential equations.

\section*{Conflict of Interests}
No conflicts of interests to declare.

\section*{Acknowledgements} The present work was initiated in September 2022 when J. Wyller (JW) was a Guest Researcher at Department of Applied
Mathematics and Computer Science, Technical University of Denmark. JW would like to express his sincere gratitude to Technical University of Denmark for kind hospitality during the stay. JW gratefully acknowledges the financial support from internal funding scheme at Norwegian University of Life Sciences (project number 1211130114), which financed the international stay at Department of Applied Mathematics and Computer Science, Technical University of Denmark, Denmark. Adam Mielke (AM) was partially funded by Statens Serum Institut, Denmark. The authors want to thank Prof. Arcady Ponosov (Norwegian University of Life Sciences), Prof. Harald Bergland (The Arctic University of Norway) and Prof. Pål Andreas Pedersen (Nord University, Norway) for fruitful and stimulating discussions during the preparation phase of this paper.  We are thankful for the eminent and profound contributions from Snif Mielke.

\appendix

\section{Continuous dependence on the time history}\label{Continuous dependence on the time history}
Our results above only directly apply to kernels that allow the linear chain trick. However, we want to emphasize that the solutions depend continuously on the time histories, which implies a larger degree of generality. We will show this below.\\
\newline
More precisely, we will show that the mapping from the Banach-space of absolute integrable temporal kernels to the Banach-space of absolute continuous solutions of the distributed time delay system (\ref{system1}) is a \emph{continuous mapping with respect to the metrics}. This means that even though two integral kernels point-wise deviate significantly from each other the respective outputs (=the solutions of the system  (\ref{system1})-(\ref{matrix})) can be close to each other.\\
\newline
We proceed as follows: Let $\textbf{x}_{1}$ and $\textbf{x}_{2}$ be solutions of the distributed delay system (\ref{system1}) emanating from the same initial condition, i.e., 
\begin{equation}\label{System1}
\left.\begin{aligned} 
&\dot{\textbf{x}}_{1}(t)=\textbf{F}(\textbf{x}_{1}(t),(\Re_{1}{\textbf{x}_{1}})(t)),\quad t\in\mathcal{I}_{+}\equiv (0,t^{\ast})&\\
&\textbf{x}_{1}(\tau)=\textbf{u}(\tau),\tau\in\mathcal{I}_{-}\equiv (-\infty,0]&
\end{aligned} \right\}
\end{equation}
\begin{equation}\label{System2a}
\left.\begin{aligned} 
&\dot{\textbf{x}}_{2}(t)=\textbf{F}(\textbf{x}_{2}(t),(\Re_{2}{\textbf{x}_{2}})(t)),\quad t\in\mathcal{I}_{+}\equiv (0,t^{\ast})&\\
&\textbf{x}_{2}(\tau)=\textbf{u}(\tau),\tau\in\mathcal{I}_{-}\equiv (-\infty,0]&
\end{aligned} \right\}
\end{equation}
where $t^{\ast}\leq\infty$. Here
\begin{equation}
\label{System2}
\left.\begin{aligned}
&(\Re_{m}{\textbf{x}_{m}})(t)=\sum_{k=1}^{N}c_{k}\textbf{z}_{k}^{(m)}(t)&\\
& \textbf{z}_{k}^{(m)}(t)=\big[\alpha_{k}^{(m)}\otimes\textbf{x}_{d}^{(m)}\big](t)
\equiv\int\limits_{-\infty}^{t}\alpha_{k}^{(m)}(t-s)\textbf{x}_{d}^{(m)}(s)ds\in\mathbb{R}^{D}&\\
&\textbf{x}_{d}^{(m)}\equiv\mathbb{J}\cdot\textbf{x}_{m}=[x_{1}^{(m)}, x_{2}^{(m)},\cdots, x_{d}^{(m)},0,0,\cdots,0]^{T}&
\end{aligned} \right\}
\end{equation}
for $m=1,2$. Here $\mathbb{J}$ is the $D\times D$-matrix defined as (\ref{matrix}). Let $L^{1}[\mathbb{R}_{0}^{+}]$ denote the space of absolute integrable functions on $\mathbb{R}_{0}^{+}$, i.e., 
\begin{equation}
L^{1}[\mathbb{R}_{0}^{+}]=\{f:\mathbb{R}_{0}^{+}\rightarrow\mathbb{C};\|f\|_{1}\equiv\int\limits_{0}^{\infty}\vert f(t)\vert dt<\infty\}\label{absolute1}
\end{equation}
We assume that the sequences of integral kernels $\{\alpha_{k}^{(m)}\}_{k=1}^{N}$ belong to the space $L^{1}[\mathbb{R}_{0}^{+}]$. The norm difference between $\alpha_{k}^{(2)}$ and $\alpha_{k}^{(1)}$ is given as
\begin{equation}
\|\alpha_{k}^{(2)}-\alpha_{k}^{(1)}\|_{1}\equiv\int\limits_{0}^{\infty}\vert\alpha_{k}^{(2)}-\alpha_{k}^{(1)}\vert (t)dt<\infty,\quad k=1,2,\cdots,N\label{absolute2}
\end{equation}
\begin{remark} The integral kernels (\ref{system3})-(\ref{system4}) belong to $L^{1}[\mathbb{R}_{0}^{+}]$.
\end{remark}
We furthermore make use of the norm $\|\cdot\|$ defined as
\begin{equation}
\|\textbf{a}\|\equiv\sum\limits_{k=1}^{D}\vert a_{k}\vert,\quad \textbf{a}=[a_{1}, a_{2},\cdots,a_{D}]\in\mathbb{R}^{D}\label{norm}
\end{equation}
Let $t\in\mathcal{I}_{+}$ and introduce the supremum norm
\begin{equation}
\|\textbf{x}\|_{\infty}(t)\equiv\sup\limits_{s\in[0,t]}\|\textbf{x}(s)\|\label{supremumnorm}
\end{equation}
and the distance function $\Delta:\mathcal{I}_{+}\rightarrow\mathbb{R}_{0}^{+}$ defined as
\begin{equation}
\Delta(t)\equiv\|\textbf{x}_{2}-\textbf{x}_{1}\|_{\infty}(t)\label{distance}
\end{equation}
on the Banach space of absolute continuous functions. This function measures the norm difference between the integral curves of the systems  (\ref{System1}) and  (\ref{System2a}). We have the following result:
\begin{lemma} \label{lemma} The norm of the difference $\Re_{2}{\textbf{x}_{2}}-\Re_{1}{\textbf{x}_{1}}$ satisfies the bounding inequality
\begin{eqnarray}
&\|\Re_{2}{\textbf{x}_{2}}-\Re_{1}{\textbf{x}_{1}}\|(t)\leq\big[\sum\limits_{k=1}^{N}\vert c_{k}\vert \|\alpha_{k}^{(2)}-\alpha_{k}^{(1)}\|_{1}\big]\sup\limits_{s\in (-\infty,t]}\|\textbf{x}_{2}(s)\|&\nonumber\\
&+\big[\sum\limits_{k=1}^{N}\vert c_{k}\vert\|\alpha_{k}^{(1)}\|_{1}\big]\Delta(t)&\label{delaydifferenceestimate}
\end{eqnarray}
for all $t\in\mathcal{I}_{+}$.
\end{lemma}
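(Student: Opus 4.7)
The plan is to isolate the two sources of discrepancy — the difference in the kernels and the difference in the trajectories — by a standard add-and-subtract argument, then estimate each piece separately. Concretely, I first expand
\[
(\Re_{2}\textbf{x}_{2}-\Re_{1}\textbf{x}_{1})(t)=\sum_{k=1}^{N}c_{k}\bigl(\textbf{z}_{k}^{(2)}(t)-\textbf{z}_{k}^{(1)}(t)\bigr),
\]
and inside each term I insert the mixed quantity $\int_{-\infty}^{t}\alpha_{k}^{(1)}(t-s)\textbf{x}_{d}^{(2)}(s)\,ds$ to split the difference $\textbf{z}_{k}^{(2)}-\textbf{z}_{k}^{(1)}$ into a ``kernel-difference'' piece (acting on $\textbf{x}_{d}^{(2)}$) and a ``trajectory-difference'' piece (convolved against $\alpha_{k}^{(1)}$).

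For the kernel-difference piece, I apply the triangle inequality under the integral, pull the supremum $\sup_{s\in(-\infty,t]}\|\textbf{x}_{d}^{(2)}(s)\|$ outside, and use the substitution $u=t-s$ together with the definition (\ref{absolute2}) to recognise the remaining integral as $\|\alpha_{k}^{(2)}-\alpha_{k}^{(1)}\|_{1}$. Since $\mathbb{J}$ is a coordinate projection, $\|\textbf{x}_{d}^{(2)}(s)\|\leq\|\textbf{x}_{2}(s)\|$ by the norm (\ref{norm}), so the first bound of the lemma's right-hand side is produced. For the trajectory-difference piece, I exploit the fact that $\textbf{x}_{1}$ and $\textbf{x}_{2}$ share the same initial history $\textbf{u}$ on $\mathcal{I}_{-}$, which collapses the integral to $\int_{0}^{t}|\alpha_{k}^{(1)}(t-s)|\,\|\textbf{x}_{d}^{(2)}(s)-\textbf{x}_{d}^{(1)}(s)\|\,ds$. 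On this interval I bound the pointwise norm by $\Delta(t)$ using (\ref{distance}), pull it out, and extend the remaining kernel integral to $[0,\infty)$ to pick up $\|\alpha_{k}^{(1)}\|_{1}$.

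Combining the two estimates for a single index $k$ and then summing with weights $|c_{k}|$ via the triangle inequality yields exactly (\ref{delaydifferenceestimate}). The calculation is essentially mechanical; the only mildly subtle point, which I would call out explicitly, is verifying that the shared-initial-data hypothesis legitimately truncates the trajectory-difference integral to $[0,t]$, so that $\Delta(t)$ (which is defined as a supremum over $[0,t]$ rather than over $(-\infty,t]$) is actually an upper bound for the integrand. Everything else — interchanging norm and integral, the change of variables $u=t-s$, and the extension of the integration domain to bound the convolution by the $L^{1}$-norm of the kernel — is routine.
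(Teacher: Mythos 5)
Your proposal is correct and follows essentially the same route as the paper: the same add-and-subtract decomposition (inserting $\alpha_{k}^{(1)}\otimes\textbf{x}_{d}^{(2)}$, which the paper phrases as the ``suspension trick''), the same $L^{1}$-versus-supremum estimates for the two pieces, and the same use of the shared initial history to justify bounding the trajectory-difference term by $\Delta(t)$. Your explicit remark on why the shared initial data truncates that integral to $[0,t]$ is a point the paper leaves implicit, but the argument is otherwise identical.
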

\begin{proof} In the process of deriving the estimate (\ref{delaydifferenceestimate}) we will make use of the fact that (\ref{System1}) and (\ref{System2a}) have the same initial condition, the absolute integrability of the temporal kernels $\{\alpha_{k}^{(m)}\}_{k=1}^{N}$ (m=1,2), the norm difference (\ref{absolute2}), the fact that $\|\textbf{y}_{d}\|=\|\mathbb{J}\cdot\textbf{y}\|\leq\|\textbf{y}\|$ for any vector $\textbf{y}\in\mathbb{R}^{D}$ and the triangle inequality for integrals. We first obtain
\begin{eqnarray}
&\|\Re_{2}{\textbf{x}_{2}}-\Re_{1}{\textbf{x}_{1}}\|(t)=\|\sum\limits_{k=1}^{N} c_{k}(\alpha_{k}^{(2)}\otimes\textbf{x}_{d}^{(2)}-\alpha_{k}^{(1)}\otimes\textbf{x}_{d}^{(1)})\|(t)&\nonumber\\
&=\|\sum\limits_{k=1}^{N} c_{k}\big[(\alpha_{k}^{(2)}-\alpha_{k}^{(1)})\otimes\textbf{x}_{d}^{(2)}+\alpha_{k}^{(1)}\otimes(\textbf{x}_{d}^{(2)}-\textbf{x}_{d}^{(1)})\big]\|(t)
\label{estimate1}\\
&&\nonumber\\
&\leq\sum\limits_{k=1}^{N}\vert c_{k}\vert\|(\alpha_{k}^{(2)}-\alpha_{k}^{(1)})\otimes\textbf{x}_{d}^{(2)}\|(t)+\sum\limits_{k=1}^{N}\vert c_{k}\vert\|\alpha_{k}^{(1)}\otimes(\textbf{x}_{d}^{(2)}-\textbf{x}_{d}^{(1)})\|(t)&\nonumber
\end{eqnarray}
by means of the suspension trick $a-b=(a-c)+(c-b)$ ($a,b,c\in\mathbb{R}$) and the triangle inequality. Next, we find the estimates
\begin{eqnarray}
&\|(\alpha_{k}^{(2)}-\alpha_{k}^{(1)})\otimes\textbf{x}_{d}^{(2)}\|(t)=\|\int\limits_{-\infty}^{t}(\alpha_{k}^{(2)}-\alpha_{k}^{(1)})(t-s)\textbf{x}_{d}^{(2)}(s)ds\|&\nonumber\\
&&\nonumber\\
&\leq\int\limits_{-\infty}^{t}\vert(\alpha_{k}^{(2)}-\alpha_{k}^{(1)})(t-s)\vert\|\textbf{x}_{d}^{(2)}(s)\|ds&\nonumber\\
&&\nonumber\\
&\leq\int\limits_{-\infty}^{t}\vert(\alpha_{k}^{(2)}-\alpha_{k}^{(1)})(t-s)\vert\|\textbf{x}_{2}(s)\|ds\leq\sup\limits_{s\in (-\infty,t]}\|\textbf{x}_{2}(s)\|\cdot\|\alpha_{k}^{(2)}-\alpha_{k}^{(1)}\|_{1}&\nonumber
\end{eqnarray}
and
\begin{eqnarray}
&\|\alpha_{k}^{(1)}\ast(\textbf{x}_{d}^{(2)}-\textbf{x}_{d}^{(1)})\|(t)=\|\int\limits_{-\infty}^{t}\alpha_{k}^{(1)}(t-s)(\textbf{x}_{d}^{(2)}-\textbf{x}_{d}^{(1)})(s)ds\|&\nonumber\\
&&\nonumber\\
&\leq\int\limits_{-\infty}^{t}\vert\alpha_{k}^{(1)}(t-s)\vert\|(\textbf{x}_{d}^{(2)}-\textbf{x}_{d}^{(1)})(s)\|ds&\nonumber\\
&&\nonumber\\
&\leq\int\limits_{-\infty}^{t}\vert\alpha_{k}^{(1)}(t-s)\vert\|(\textbf{x}_{2}-\textbf{x}_{1})(s)\|ds\leq\|\alpha_{k}^{(1)}\|_{1}\cdot
\|\textbf{x}_{2}-\textbf{x}_{1}\|_{\infty}(t)=\|\alpha_{k}^{(1)}\|_{1}\Delta(t)&\nonumber
\end{eqnarray}
We readily obtain (\ref{delaydifferenceestimate}) by combining these estimates  with (\ref{estimate1}).
\end{proof}
The following theorem shows that the mapping from the Banach-space $L^{1}[\mathbb{R}_{0}^{+}]$ to the Banach-space of absolute continuous solutions of the distributed delay system (\ref{system1}) is a continuous map (i.e., continuity with respect to the metrics):
\begin{thm}\label{stabilitytheorem}
Let $T\in\mathcal{I}_{+}$ with $T<\infty$. The distance function $\Delta$ satisfies the stability estimate
\begin{equation}
\Delta(T)\leq LT\sup\limits_{t\in(-\infty,T]}(\|\textbf{x}_{2}\|(t))\bigg[\sum\limits_{k=1}^{N}\vert c_{k}\vert\cdot\|\alpha_{k}^{(2)}-\alpha_{k}^{(1)}\|_{1}\bigg]\exp\big(\Omega_{N}LT\big)\label{stabilityestimate}
\end{equation}
where
\begin{equation}
\Omega_{N}\equiv\big[1+\sum\limits_{k=1}^{N}\vert c_{k}\vert\|\alpha_{k}^{(1)}\|_{1}\big]\nonumber
\end{equation}
and $L$ is given as 
\begin{equation}
L\equiv\sup\limits_{(\textbf{x},\textbf{y})\in U}\big[\|D\textbf{F}\|_{W^{1,\infty}(U)}\big]\label{Lipschitzconstant}
\end{equation}
Here $D\textbf{F}$ is the Jacobian of the differential of $\textbf{F}$ and $L$ is the operator norm. Here $W^{1,\infty}(U)$ denotes the Sobolev space where the elements and their distributional derivatives are $L^{\infty}$-functions on an open, connected, convex Euclidean subset $U$ of the product space $\mathbb{R}^{D}\times\mathbb{R}^{D}$.
\end{thm}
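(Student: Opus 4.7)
The plan is to convert the two ODEs into integral form, subtract them, exploit the fact that they share the same initial function, and then combine Lipschitz continuity of $\textbf{F}$ with the kernel estimate from Lemma \ref{lemma} to set up a Gronwall-type inequality for $\Delta$.

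Concretely, I would first observe that, because $\textbf{x}_1$ and $\textbf{x}_2$ satisfy (\ref{System1}) and (\ref{System2a}) with the same history $\textbf{u}$ on $\mathcal{I}_{-}$, integrating from $0$ to $t \in \mathcal{I}_{+}$ and subtracting yields
\begin{equation}
\textbf{x}_{2}(t)-\textbf{x}_{1}(t)=\int_{0}^{t}\bigl[\textbf{F}(\textbf{x}_{2}(s),(\Re_{2}\textbf{x}_{2})(s))-\textbf{F}(\textbf{x}_{1}(s),(\Re_{1}\textbf{x}_{1})(s))\bigr]\,ds. \nonumber
\end{equation}
Since $U$ is open, connected and convex and $D\textbf{F}\in W^{1,\infty}(U)$ with operator norm bounded by $L$, the mean value inequality gives
\begin{equation}
\|\textbf{F}(\textbf{x}_{2}(s),(\Re_{2}\textbf{x}_{2})(s))-\textbf{F}(\textbf{x}_{1}(s),(\Re_{1}\textbf{x}_{1})(s))\|\leq L\bigl(\|\textbf{x}_{2}(s)-\textbf{x}_{1}(s)\|+\|(\Re_{2}\textbf{x}_{2})(s)-(\Re_{1}\textbf{x}_{1})(s)\|\bigr). \nonumber
\end{equation}

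Next I would apply the triangle inequality under the integral, pass to the supremum over $[0,t]$, and invoke Lemma \ref{lemma} to control the delay term. Writing $M_{T}\equiv\sup_{s\in(-\infty,T]}\|\textbf{x}_{2}(s)\|$ and $K_{2}\equiv\sum_{k=1}^{N}|c_{k}|\,\|\alpha_{k}^{(2)}-\alpha_{k}^{(1)}\|_{1}$, and noting that $\sup_{u\in(-\infty,s]}\|\textbf{x}_{2}(u)\|\leq M_{T}$ for $s\leq T$, the bound (\ref{delaydifferenceestimate}) yields, for every $t\in[0,T]$,
\begin{equation}
\Delta(t)\leq L\int_{0}^{t}\bigl[\Delta(s)+K_{2}M_{T}+\bigl(\textstyle\sum_{k=1}^{N}|c_{k}|\|\alpha_{k}^{(1)}\|_{1}\bigr)\Delta(s)\bigr]\,ds=L\,K_{2}M_{T}\,t+L\,\Omega_{N}\int_{0}^{t}\Delta(s)\,ds. \nonumber
\end{equation}
(The key monotonicity fact is that $\Delta$ and $M_{T}$ are nondecreasing in $t$, so the supremum passes through the integral cleanly.)

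Finally, I would apply Gronwall's inequality in its standard form: if $\Delta(t)\leq a(t)+b\int_{0}^{t}\Delta(s)\,ds$ with $a$ nondecreasing and $b\geq 0$, then $\Delta(t)\leq a(t)\exp(bt)$. With $a(t)=L\,K_{2}M_{T}\,t$ and $b=L\,\Omega_{N}$, setting $t=T$ immediately produces
\begin{equation}
\Delta(T)\leq L\,T\,M_{T}\,K_{2}\exp(L\,\Omega_{N}T), \nonumber
\end{equation}
which is precisely (\ref{stabilityestimate}). The only subtle step is the monotonicity argument that lets us replace the pointwise norm $\|\textbf{x}_{2}(s)-\textbf{x}_{1}(s)\|$ inside the convolution by $\Delta(s)$ (and similarly control $\sup_{u\in(-\infty,s]}\|\textbf{x}_{2}(u)\|$ by $M_{T}$); everything else is a direct combination of the Lipschitz estimate, Lemma \ref{lemma}, and Gronwall, so I expect no genuine obstacle beyond bookkeeping of norms.
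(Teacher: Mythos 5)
Your proposal is correct and follows essentially the same route as the paper's proof: integrate both systems, subtract, apply the Lipschitz bound (\ref{Lipschitzcondition}) together with Lemma \ref{lemma}, and close with the integral form of Gr\"{o}nwall's lemma. Your explicit monotonicity remark justifying the passage from the pointwise difference to $\Delta(s)$ inside the integral is a small clarification the paper leaves implicit, but the argument is otherwise identical.
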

\begin{proof} 
We first derive the integral equations 
\begin{equation}
\textbf{x}_{2}(T)=\textbf{u}(0)+\int\limits_{0}^{T}\textbf{F}(\textbf{x}_{2}(t),(\Re_{2}{\textbf{x}_{2}})(t))dt,
\quad\textbf{x}_{1}(T)=\textbf{u}(0)+\int\limits_{0}^{T}\textbf{F}(\textbf{x}_{1}(t),(\Re_{1}{\textbf{x}_{1}})(t))dt\nonumber
\end{equation}
by formal integration of (\ref{System1}) and (\ref{System2}). Hence, we get the bound
\begin{equation}
\|\textbf{x}_{2}-\textbf{x}_{2}\|(T)\leq\int\limits_{0}^{T}\|\textbf{F}(\textbf{x}_{2},\Re_{2}{\textbf{x}_{2}})-\textbf{F}(\textbf{x}_{1},\Re_{1}{\textbf{x}_{1}})\|(t)dt\label{bound1}
\end{equation}
by the triangle inequality for integrals. 
Since by assumption $\textbf{F}$ is continuously differentiable on the open, connected, convex Euclidean subset $U$ of the product space $\mathbb{R}^{D}\times\mathbb{R}^{D}$, $\textbf{F}$ will satisfy the Lipschitz condition
\begin{equation}
\|\textbf{F}(\textbf{x}_{2},\textbf{y}_{2})-\textbf{F}(\textbf{x}_{1},\textbf{y}_{1})\|\leq L\|\textbf{x}_{2}-\textbf{x}_{1}\|+L\|\textbf{y}_{2}-\textbf{y}_{1}\|\label{Lipschitzcondition}
\end{equation}
for all $(\textbf{x}_{2},\textbf{y}_{2}),(\textbf{x}_{1},\textbf{y}_{1})\in U$, where the Lipschitz constant $L$ is given as (\ref{Lipschitzconstant}).
By exploiting (\ref{Lipschitzcondition}) (with $\textbf{y}_{2}=\Re_{2}{\textbf{x}_{2}}$ and $\textbf{y}_{1}=\Re_{1}{\textbf{x}_{1}}$) and Lemma \ref{lemma} we get 
\begin{eqnarray}
&\|\textbf{x}_{2}-\textbf{x}_{1}\|(T)\leq LT\bigg[\sum\limits_{k=1}^{N}\vert c_{k}\vert\cdot\|\alpha_{k}^{(2)}-\alpha_{k}^{(1)}\|_{1}\bigg]
\sup\limits_{t\in(-\infty,T]}(\|\textbf{x}_{2}\|(t))&\nonumber\\
&&\nonumber\\
&+L\bigg[1+\sum\limits_{k=1}^{N}\vert c_{k}\cdot\vert\|\alpha_{k}^{(1)}\|_{1}\bigg]\int\limits_{0}^{T}\Delta
(t)dt&\nonumber
\end{eqnarray}
from which it follows that the distance function $\Delta$ satisfies the integral inequality
\begin{eqnarray}
&\Delta(T)\leq LT\bigg[\sum\limits_{k=1}^{N}\vert c_{k}\vert\cdot\|\alpha_{k}^{(2)}-\alpha_{k}^{(1)}\|_{1}\bigg]
\sup\limits_{t\in(-\infty,T]}(\|\textbf{x}_{2}\|(t))
&\nonumber\\
&&\nonumber\\
&+L\bigg[1+\sum\limits_{k=1}^{N}\vert c_{k}\vert\cdot\|\alpha_{k}^{(1)}\|_{1}\bigg]\int\limits_{0}^{T}\Delta
(t)dt&\nonumber
\end{eqnarray}
By using the integral form of Gr\"{o}nwalls lemma, we end up with \eqref{stabilityestimate}. This completes the proof.
\end{proof}
\begin{remark}The integral form of Gr\"{o}nwalls lemma reads: Let $I$ denote an interval of the real line of the form $[a, \infty)$, $[a, b]$ or $[a, b)$ with $a < b$. Let $\alpha$, $\beta$ and $u$ be real-valued functions defined on $I$, with the following properties:.
\begin{itemize}
\item The negative part of $\alpha$ is integrable on every closed and bounded sub-interval of $I$ and $\alpha$ is a non-decreasing function.
\item $u$ and $\beta$ are continuous on $I$.
\end{itemize}
Then, if $\beta$ is non-negative and $u$ satisfies the integral inequality
\begin{equation}
u(T)\leq\alpha(T)+\int\limits_{a}^{T}\beta(t)u(t)dt,\quad\forall T\in I\nonumber
\end{equation}
we have
\begin{equation}
u(T)\leq\alpha(T)\exp\bigg[\int\limits_{a}^{T}\beta(t)dt\bigg]\nonumber
\end{equation}
See Bellmann \cite{bellman1943}. In our case
\begin{eqnarray}
&\alpha(T)=LT\sup\limits_{t\in(-\infty,T]}(\|\textbf{x}_{2}\|(t))\bigg[\sum\limits_{k=1}^{N}\vert c_{k}\vert\cdot\|\alpha_{k}^{(2)}-\alpha_{k}^{(1)}\|_{1}\bigg]&\nonumber\\
&&\nonumber\\
&\beta(t)=L\bigg[1+\sum\limits_{k=1}^{N}\vert c_{k}\vert\cdot\|\alpha_{k}^{(1)}\|_{1}\bigg],\quad u(t)=\Delta(t)=\|\textbf{x}_{2}-\textbf{x}_{1}\|_{\infty}(t)&\nonumber
\end{eqnarray}
\end{remark}
\begin{remark} Theorem \ref{stabilitytheorem} holds true when the coefficients $c_{1},c_{2},\cdots, c_{N}$ in (\ref{system2}) are continuous on the closed bounded interval $[0,T]$. Here $T<\infty$.
\end{remark}

\section{The characteristic polynomial (\ref{characteristicpolynomiallogistic})}\label{Appendix B}
The coefficients of the characteristic polynomial (\ref{characteristicpolynomiallogistic}) are given as
\begin{subequations}
\label{populationdynamicscoeff}
\begin{eqnarray}
&a_{1}=-tr\big[D_{\textbf{x}}\textbf{F}(P_{e})\big]=6\sigma,\quad a_{2}=\sigma^{2}[2\theta+15]&\label{populationdynamicscoeffa}\\
&&\nonumber\\
&a_{3}=\sigma^{3}[20+8\theta+\varepsilon\mu+\mu]&\label{populationdynamicscoeffb}\\
&&\nonumber\\
&a_{4}=\sigma^{4}[\theta^{2}+12\theta+15+4\varepsilon\mu+4\mu]&\label{populationdynamicscoeffc}\\
&&\nonumber\\
&a_{5}=\sigma^{5}[2\theta^{2}+8\theta+6+2\mu\theta+6\mu-\varepsilon \mu\theta+6\varepsilon\mu]&\label{populationdynamicscoeffd}\\
&&\nonumber\\
&a_{6}=\sigma^{6}[\theta^{2}+2\theta+1+4\mu\theta+4\mu-2\varepsilon \mu\theta+4\varepsilon\mu]&\label{populationdynamicscoeffe}\\
&&\nonumber\\
&a_{7}=-\vert D_{\textbf{x}}\textbf{F}(P_{e})\vert =-\sigma^{7}\mu\big[\varepsilon(\theta-1)-(\theta+1)^{2}\big]&\label{populationdynamicscoefff}
\end{eqnarray}
\end{subequations}
Here the parameter $\theta$ is defined by means of (\ref{equilibriumlogisticd}) whereas $\mu_{0}$ and $\mu_{\varepsilon}$ are defined as
\begin{equation}
\theta\equiv\Omega^{2}\sigma^{-2},\quad\mu_{0}\equiv r\sigma^{-1},\quad\mu\equiv \mu_{0}\cdot(1+\theta)^{2}[(1+\theta)^{2}-\varepsilon(\theta-1)]^{-1}\label{parametersthetamu}
\end{equation}
By introducing the scaling $z\rightarrow \sigma z$, we find that $\mathcal{P}_{7}$ obeys the scaling property
\begin{equation}
\mathcal{P}_{7}(\sigma z;\textbf{a})=\sigma^{7}\mathcal{P}_{7}(z;\textbf{b}),\quad \textbf{a}=\mathbb{S}\textbf{b}\label{scaling}
\end{equation}
where 
\begin{equation}
\textbf{a}=[a_{1},a_{2},a_{3},a_{4},a_{5},a_{6},a_{7}]^{t}\in\mathbb{R}^{7}\nonumber
\end{equation}
The components $a_{j}$ ($j=1,2,\cdots,7$) are given by (\ref{populationdynamicscoeff}). The coordinate vector $\textbf{b}$ is given as
\begin{equation}
\textbf{b}=[b_{1},b_{2},b_{3},b_{4},b_{5},b_{6},b_{7}]^{t}\in\mathbb{R}^{7}\nonumber \end{equation}
with components $b_{j}$ ($j=1,2,\cdots,7$) given as
\begin{eqnarray}
&b_{1}=6,\quad b_{2}=2\theta+15,\quad b_{3}=20+8\theta+\varepsilon\mu+\mu&\nonumber\\
&&\nonumber\\
&b_{4}=\theta^{2}+12\theta+15+4\varepsilon\mu+4\mu,\quad b_{5}=2\theta^{2}+8\theta+6+2\mu\theta+6\mu-\varepsilon\mu\theta+6\varepsilon\mu&\nonumber\\
&&\nonumber\\
&b_{6}=\theta^{2}+2\theta+1+4\mu\theta+4\mu-2\varepsilon\mu\theta+4\varepsilon\mu,\quad b_{7}=-\mu\big[\varepsilon(\theta-1)-(\theta+1)^{2}\big]&\nonumber
\end{eqnarray}
and $\mathbb{S}$ as the diagonal matrix
\begin{equation}
\mathbb{S}
=\left[
  \begin{array}{cccccccc}
\sigma& 0& 0& 0& 0&0&0 \\
0&\sigma^{2}&0&0&0&0&0\\
0&0&\sigma^{3}&0&0&0&0\\
0&0&0&\sigma^{4}&0&0&0\\
0&0&0&0&\sigma^{5}&0&0\\
0&0&0&0&0&\sigma^{6}&0\\
0&0&0&0&0&0&\sigma^{7}
    \end{array}
\right]\nonumber
\end{equation}
The scaling property (\ref{scaling}) shows that the stability assessment can be carried out with the coefficients in the characteristic polynomial $\mathcal{P}_{7}$ given by the $b_{j}$ - parameters. 
In this case the Routh-Hurwitz matrices for the polynomial $\mathcal{P}_{7}$ are given as
\begin{eqnarray}
&\mathbf{D}_{1}(\textbf{b})=[b_{1}],\quad \mathbf{D}_{2}(\textbf{b})=\left[
  \begin{array}{cccccccc}
b_{1}& b_{3}\\
1& b_{2}
    \end{array}
\right],\quad \mathbf{D}_{3}(\textbf{b})=\left[
  \begin{array}{cccccccc}
b_{1}& b_{3}&b_{5}\\
1& b_{2}& b_{4}\\
0& b_{1}&b_{3}
    \end{array}
\right]&\nonumber\\
& \mathbf{D}_{4}(\textbf{b})=\left[
  \begin{array}{cccccccc}
b_{1}& b_{3}&b_{5}&b_{7}\\
1& b_{2}& b_{4}& b_{6}\\
0& b_{1}&b_{3}& b_{5}\\
0& 1& b_{2} & b_{4}
    \end{array}
\right],\quad  \mathbf{D}_{5}(\textbf{b})=\left[
  \begin{array}{cccccccc}
b_{1}& b_{3}&b_{5}&b_{7}&0\\
1& b_{2}& b_{4}&b_{6}&0\\
0& b_{1}&b_{3}& b_{5}&b_{7}\\
0& 1& b_{2} & b_{4}&b_{6}\\
0& 0&b_{1}&b_{3}&b_{5}
    \end{array}
\right]&\nonumber\\
&\mathbf{D}_{6}(\textbf{b})=\left[
  \begin{array}{cccccccc}
b_{1}& b_{3}&b_{5}&b_{7}&0&0\\
1& b_{2}& b_{4}&b_{6}&0&0\\
0& b_{1}&b_{3}& b_{5}&b_{7}&0\\
0& 1& b_{2} & b_{4}&b_{6}&0\\
0& 0&b_{1}&b_{3}&b_{5}& b_{7}\\
0 & 0&1&b_{2}&b_{4}&b_{6}  \end{array}
\right],\quad 
\mathbf{D}_{7}(\textbf{b})=\left[
  \begin{array}{cccccccc}
b_{1}& b_{3}&b_{5}&b_{7}&0&0&0\\
1& b_{2}& b_{4}&b_{6}&0&0&0\\
0& b_{1}&b_{3}& b_{5}&b_{7}&0&0\\
0& 1& b_{2} & b_{4}&b_{6}&0&0\\
0& 0&b_{1}&b_{3}&b_{5}& b_{7}&0\\
0 & 0&1&b_{2}&b_{4}&b_{6} &0\\
0&0&0&b_{1}&b_{3}&b_{5}& b_{7}
 \end{array}
\right]&\nonumber
\end{eqnarray}
By using (\ref{populationdynamicscoeff}) one can show that the Routh-Hurwitz determinants $\vert \mathbf{D}_{j}\vert\equiv\vert \mathbf{D}_{j}(\textbf{a})\vert$ of (\ref{characteristicpolynomiallogistic}) scale as 
\begin{equation}
\vert\mathbf{D}_{j}(\textbf{a})\vert=\sigma^{\frac{1}{2}j(j+1)}\vert\mathbf{D}_{j}(\textbf{b})\vert,\quad j=1,2,\cdots,7\nonumber
\end{equation}
Therefore we can base the analysis of the stability and bifurcation analysis on the properties of the Routh-Hurwitz determinants $\vert\mathbf{D}_{j}(\textbf{b})\vert)$ instead of $\vert \mathbf{D}_{j}(\textbf{a})\vert$.\\
\newline
Noticing that $\mu_{\varepsilon}=\mu_{0}=2$ for $\varepsilon=0$ and $\sigma=\frac{1}{2}r$, direct computation
yields the expressions
\begin{eqnarray}
 &\vert \mathbf{D_{1}}(\textbf{b})\vert=6>0,\quad \vert \mathbf{D_{2}}(\textbf{b})\vert=4\theta+68>0 &\nonumber\\
 &&\nonumber\\
 &\vert\mathbf{D_{3}}(\textbf{b})\vert=8\theta^{2}+272\theta+776>0&\nonumber\\
&&\nonumber\\
&\vert\mathbf{D_{4}}(\textbf{b})\vert=64\theta^{3}+1216\theta^{2}+6336\theta+5184>0&\nonumber\\
&&\nonumber\\
&\vert\mathbf{D_{5}}(\textbf{b})\vert=128\theta^{5}+2688\theta^{4}+17664\theta^{3}+38144\theta^{2}+33408\theta+10368>0&\nonumber\\
&&\nonumber\\
&\vert\mathbf{D_{6}}(\textbf{b})\vert=0,\quad b_{7}=2(\theta+1)^{2}>0,\quad \vert\mathbf{D_{7}}(\textbf{b})\vert=b_{7}\vert\mathbf{D_{6}}(\textbf{b})\vert=0&\nonumber
\end{eqnarray}
for the Routh-Hurwitz determinants $\vert\mathbf{D}_{j}(\textbf{b})\vert$ ($j=1,2,\cdots, 7$) evaluated at the points $(\theta,\varepsilon,\mu_{0})=(\theta,0,2)$. For $\Omega$ and $r>0$ given, the transformation (\ref{parametersthetamu}) defines a smooth mapping $(\sigma,\varepsilon)\rightarrow (\theta,\varepsilon,\mu_{0})$ for $\sigma>0$. Since the Routh-Hurwitz determinants $\vert\mathbf{D}_{j}(\textbf{b})\vert$ ($j=1,2,\cdots,7$) are smooth functions of $(\theta,\varepsilon,\mu_{0})$, we conclude that the composite mapping $(\sigma,\varepsilon)\rightarrow (\theta,\varepsilon,\mu_{0})\rightarrow\vert\mathbf{D}_{j}(\textbf{b})\vert$ ($j=1,2,\cdots,7$) is a smooth mapping for $\sigma>0$, as well. Moreover, there is an open neighborhood about $(\frac{1}{2}r,0)$ where $\vert\mathbf{D}_{j}(\textbf{b})\vert>0$ ($j=1,2,\cdots, 5$). Hence we get the local extension of the Hopf-bifurcation result reported in Section \ref{Examples}.

\section*{Conflict of Interest}
Not applicable.

\bibliographystyle{elsarticle-num}
\bibliography{DistributedDelaySystemsOscillations}

\end{document}